\def\ce{{\mathcal E}}
\def\cl{{\mathcal L}}
\def\cn{{\mathcal N}}
\def\cv{{\mathcal V}}
\def\E{{\mathbb E}}
\def\P{{\mathbb P}}
\def\R{{\mathbb R}}
\def\s{\star}
\def\t{\theta}
\def\ind#1{{\bf 1}_{\left\{#1\right\}}}
\def\abs#1{|#1|}
\def\cov{\mathop{\rm Cov}\nolimits}
\def\Var{\mathop{\rm Var}\nolimits}
\def\Cvar{\mathop{\rm CVaR}\nolimits}
\def\supp{\mathop{\rm supp}\nolimits}
\def\norm#1{\mathop{\left| #1 \right|}\nolimits}
\def\inv#1{\mathop{\frac{1}{ #1}}\nolimits}
\def\expp#1{\mathop {\mathrm{e}^{ #1}}}
\theoremstyle{plain}
\newtheorem{thm}{Theorem}[section]
\newtheorem{proposition}[thm]{Proposition}
\newtheorem{lemma}[thm]{Lemma}
\theoremstyle{definition}
\newtheorem{algo}[thm]{Algorithm}
\let\cite\citet
\title{Rare event simulation for electronic circuit design}
\date{\today}
\author{Xavier Jonsson\footnote{Siemens Electronic Design Automation, 110 Rue Blaise Pascal, 38330 Montbonnot FRANCE,
\newline \indent e-mail: xavier\_jonsson@siemens.com} \and J\'er\^ome Lelong\footnote{Univ. Grenoble Alpes, CNRS, Grenoble INP, LJK, 38000 Grenoble, France, \newline
\indent e-mail: jerome.lelong@univ-grenoble-alpes.fr}}
\begin{document}
\maketitle

\begin{abstract}
  In this work, we propose an algorithm to simulate rare events for electronic circuit design. Our approach heavily relies on a smart use of importance sampling, which enables us to tackle probabilities of the magnitude $10^{-10}$. Not only can we compute very rare default probability, but we can also compute the quantile associated to a given default probability and its expected shortfall. We show the impressive efficiency of method on real circuits.
\end{abstract}

\section{Introduction}

We consider the problem of computing an expectation in the context of rare event simulation using a Monte Carlo approach.
It is well known that, in such a framework, the accuracy of the Monte Carlo estimator is quite bad because too few samples 
lead to a non zero value of the involved characteristics. Assume we want to compute 
$ \E[\varphi(X)] $, where $\varphi : \R^d \longmapsto \R$ and $X$ is a random vector with values in $\R^d$. 
There are basically two well-known tools to improve the accuracy of the crude Monte Carlo estimator: importance sampling and
stratified sampling. As we will see in Section~\ref{sec:is}, importance sampling can be implemented in a quite 
fully automated way while stratification requires an a priori good knowledge of the realization space. 

As an application problem, we will study the  efficient  and  accurate determination   of  
the   yield   of   an   integrated   circuit, through electrical circuit level simulation, 
under stochastic constraints due to the manufacturing process. Several orders of magnitude 
of variance reduction can be achieved, for example millions or billions of MC runs necessary 
for very small probability might be reduced to a few hundreds or thousands runs. Section~\ref{sec:app} 
presents quantitative results and speedup factors obtained with our importance sampling algorithm. 
The performance will be demonstrated on several integrated circuits from SRAM bitcells, to digital 
standard cells, analog IP blocks as well as on some small toy circuits. 

We first give an overview of the semiconductor industry, and of the integrated circuits 
in particular. The notion of yield is explained and how this translates to very low 
probability of failure  estimation for some kind of circuits. We will also introduce shortly the
concept of analog circuit simulation used in our context.

\subsection{EDA market and its segmentation}
\label{sec:intro}
The past decades saw a considerable evolution of the manufacturing processes of integrated circuits (IC). The 
beginning of the twenty-first century experiences the same trend but with the emergence 
of connected objects, applications linked to mobility, onboard systems (especially in the 
automotive domain), the processing of mass data or medical devices. 

For circuit designers, it is necessary to reduce the time of putting new products on the market, while satisfying both the 
demands for increased performance (consumption, reduced weights, sizes and costs). To these constraints related to the end user, 
there are the problems of safety and security (aviation, motor vehicles), connectivity and adaptivity to more and more standards: WLAN 
(Wireless Local Area Network), Bluetooth, Wi-Fi (Wireless Fidelity). The result is an exponential growth in terms of complexity 
of operation and size (in number of transistors) for ICs. 
The time to market of a product is a crucial factor in its success. These systems of great complexity have to be designed 
and validated efficiently. The manufacturing costs being high, multiple attempts cannot be accepted. 

The initial design of complex circuits is obtained in particular by the use of standard cell libraries
and blocks of intellectual property (IP) developed by specialized teams 
or companies. 
Simulation constitutes the initial step after the specification. 
From our perspective, we will consider only a limited 
number of segments of this market: memory circuit simulation and standard cells. The simulation of complete digital blocks (digital SoCs)
or large AMS/RF circuits cannot be addressed with our techniques due to the prohibitive CPU time 
required for each individual simulation. For our experiments, electrical simulations will be 
achieved with the commercial SPICE simulator Eldo\textsuperscript{\tiny\textregistered}. 
SPICE-type simulators (Simulation Program with Integrated Circuit Emphasis)
are the industry-standard way to verify circuit operation at the transistor level before 
manufacturing an integrated circuit.

\subsection{Principles of analog circuit simulation}
\label{sec:analogsim}
The automatic analysis routines of a SPICE simulation program determine the numerical solution of a mathematical
representation of the circuit under validation. 
After each circuit element has been modeled, the system
of equations describing the complete circuit is determined  
and topological constraints are determined by the interconnection of the elements. The equations
of the circuit are generally a nonlinear differential algebraic system of
of the form:
\begin{equation}
  \label{eq:algebrodiff}
  \begin{split}
    \Phi\left(u(t), \dot{u}(t), t\right) = 0 \\
    u(0) = u_0
  \end{split}
\end{equation}
where $\Phi$ is a nonlinear vector function, $u(t)$ is the vector of circuit variables (e.g. node voltages and 
charges or branch currents),  $\dot{u}(t)$ is the derivative of this vector with respect to time $t$. 
The circuit analysis consists in determining the solution of Eq.~\eqref{eq:algebrodiff} for the following 
different simulation domains: nonlinear quiescent point calculation (DC), time-domain large-signal 
solution of nonlinear differential algebraic equations (TRAN), linear small-signal 
frequency domain (AC), noise analysis (NOISE). SPICE circuit 
simulation programs are presented in \cite{nodal}, \cite{vlach_singhal} and \cite{mccalla}.

The Monte Carlo simulation in SPICE programs is performed on multiple 
model evaluations with randomly selected model process variables. These variables are noted 
$X$ in this paper. 
It is worth mentioning that the input parameters in models are in general the various 
design parameters (geometry of transistors), the process parameters (random fluctuations 
due to manufacturing process), and the environmental conditions (voltages conditions 
and temperatures). The  electrical  models of  all  major  semiconductor 
manufacturers  contain  statistical  models 
of process parameters  with the estimation  of central tendency  characteristics, such  
as  mean  value,  standard  deviation, even higher moments such as skewness  or  kurtosis.
In  the  context of circuit  simulation,  the  problems  can live in very large 
dimension. The number of parameters are in general proportional to the number of transistors.
In section~\ref{sec:examples} some circuit examples will be presented where the dimension
of $X$ ranges from a few tens to several thousands components.

A circuit performance may be any electrical characteristic that is extracted from the 
simulation. It will be noted $H = h(X)$. In our context, the simulator acts as a black-box. 
We do not know anything a priori about the intrinsic nature of the   relation  between   
the   measured   output   and   the   statistical parameters. Typical measures of 
performance may be the time delay for signal propagation across a circuit, the gain 
of an amplifier or the oscillation frequency of an oscillator. 


\subsection{Yield losses and probability of failure}
\label{sec:yield}
The yield of  an  Integrated  Circuit  (IC) plays a major role in the manufacturing cost of an 
Application-Specific IC (ASIC). This concept typically enters in a manufacturing flow called 
“Yield Learning Flow”. In such a flow, the yield is defined to be the proportion of the manufactured 
circuits that satisfy both the design engineers and the process engineers specifications. 
The yield of a manufactured product should increase from the initial design to the 
large-volume production. Yield is therefore related to the cost of production. In 
nanoscale technologies, yield losses can be entirely caused by the uncontrolled and   
unavoidable factors  like random dopant fluctuation and line edge roughness 
(see \cite{agar:nass:08}). And each new technology node  tend  to  make  the  
problem   more difficult, since the size of the transistors within 
ICs has shrunk exponentially, following Moore's Law. Over the past decades, 
the typical MOS transistors channel lengths were once several micrometers (Intel 8086 CPU 
launched in 1978 was using a 3.2 $\mu$m process), but 
today ICs  are incorporating MOS with channel lengths of tens of nanometers. 
This empirical law predicted that the number of transistors would double 
every 2 years due to shrinking transistor dimensions and additional technical improvements. 
As a consequence, some authors predicted that power consumption per unit area would 
remain constant, and that that computer chip performance would roughly double every 18 months. 
The leading manufacturers continue to satisfy this empirical trend. In 2020, TSMC  
have already started mass producing 5 nanometer (nm) chips for companies including Apple, 
Huawei and Qualcomm. TSMC and Intel both have planned 2 nm products on their road-maps, 
with TSMC scheduling earliest production for 2023 and Intel for 2029. In May 2021, 
IBM announced the development of the first chip with 2nm technology.


Yield is interpreted as the probability of failure, rather than the probability of success. 
In some applications, it is much simpler to determine the yield of a single type of circuit 
which is  repeated  a  very  large  number  of  times, say  several millions. A
typical and important example is Static Random-Access Memory (SRAM) manufacturing yield. The simple 
block in SRAM circuits is the bitcell. Its role is to store one elementary bit of data using latching 
circuitry (flip-flop). The probability of 
failure is defined here to be the probability of a wrong operation (read, write, or retention) 
occurring in an SRAM circuit. For such problems, the yield may be quantitatively appreciated 
by considering an explicit value for the acceptable quality limit (AQL). For example, an AQL 
for 1-Megabit SRAM bit-cell circuits could be 100 ppm. If production of one million such 
circuits is targeted, the manufacturing process cannot allow more than 100 defective bit-cells 
globally. This means that the probability of failure must be at most:
$$
\frac{100 \text{\,-defective bit-cells}}{10^6 \text{\,-circuits} \times 10^6 \text{\,-bits}} = 10^{-10}
$$
This relations shows that yield drops exponentially with the number of elementary blocks, and  
imposes  extremely  small  values for where a large number of repetitions is used.

The practical problem of estimating  failure probabilities for a large class   of   circuits
is then  translated   into  a   general   mathematical framework. The goal is to compute the expectation $\E[\varphi(X)]$
where the function $\varphi$ writes as an indicator function $\varphi(x) = \ind{h(x) \ge \gamma}$ 
with $x \in \R^d$, where $h$ is  a circuit  performance  of  interest (evaluated through SPICE simulations) 
and $\gamma$ is  an upper  bound  specified by the circuit designer. Note also the practical importance of
the inverse problem, e.g. the estimation of the quantile $\gamma = \Gamma(p)$ given the probability of failure $0 < p < 1$ such that
the quantile function $\Gamma$ is defined as follows:  
\begin{equation}
  \label{eq:quantilefunc}
  \Gamma(p) = \inf \lbrace y \in \R : p \le F_H(y)\rbrace
\end{equation}
in terms of the output measure distribution $F_H$. The quantile problem is fundamental in verification flows where the circuit 
designers have to validate the design of complete library of standard cells. The circuits must satisfy 
prescribed specifications, for example the time delay must be less than some upper bound $\gamma \le \gamma_u$.
We will show, with the examples, how this quantile estimator can be obtained in a fully automatic way with our algorithm.

\vspace{1em}

The rest of the paper unfolds as follows. In Section~\ref{sec:is}, we present importance sampling in a Gaussian framework and further enhance it for very rare events. Then, in Section~\ref{sec:saa-default}, we apply the methodology developed before to the computation of a default probability and its expected shortfall. In Section~\ref{sec:stratif}, we explain how importance sampling and stratified sampling can be coupled to improve the performance of our algorithm on some particularly demanding cases. Finally, Section~\ref{sec:app} presents intensive numerical experiments on real electronic circuits.

\section{Importance sampling}
\label{sec:is}
\subsection{General framework}

Assume the random vector $X$ has a density function $g : \R^d \longmapsto \R_+$.  Consider a family of density functions $(f(\cdot; \t))_{\t \in \R^p}$ parametrized by a finite dimensional parameter $\t \in \R^p$ satisfying $\supp(g) \subset \supp(f(\cdot; \t))$ for all $\t \in \R^p$. Then, we can write
\begin{equation}
  \label{eq:is}
  \E[\varphi(X)] = \int_{\supp(g)} \varphi(x) h(x) dx = \int_{\supp(f(\cdot; \t))} \varphi(x) \frac{h(x)}{f(x;\t)} f(x;\t) dx = \E\left[ \varphi(Y) \frac{h(Y)}{f(Y;\t)} \right]
\end{equation}
where $Y$ is a random vector with density function $f(\cdot; \t)$. 

From a practical point of view, one has to find the best density function $h$ in order to maximize the accuracy of the Monte Carlo estimator. First, we need to make precise in which sens ``best'' has to be understood. 

As the convergence of the Monte Carlo estimator is governed by the central limit theorem, it is quite natural to try to find the density function $f(\cdot, \t)$ which minimizes best the variance of the estimator
\[
\Var\left( \varphi(Y) \frac{h(Y)}{f(Y; \t)} \right) = \E\left[ \varphi(Y)^2 \left(\frac{h(Y)}{f(Y;\t)}\right)^2  \right] - \left( \E[\varphi(X)] \right)^2
\]
Only the second moment depends on the density function $g$ and moreover using Eq.~\eqref{eq:is} again, we obtain
\[
 \E\left[ \varphi(Y)^2 \left(\frac{h(Y)}{f(Y;\t)}\right)^2  \right] =
 \E\left[ \varphi(X)^2 \frac{h(X)}{f(X;\t)}  \right].
\]
If the optimization problem $\min_\t \E\left[ \varphi(X)^2 \frac{h(X)}{f(X;\t)}  \right]$ is well-posed (strongly convex for instance), we can solve this problem directly.

\subsection{The Gaussian case}
\label{sec:meanshift}

Assume the density function $g$ stands for the standard Gaussian density function in $\R^d$.  Consider a family of Gaussian distributions with values in $\R^d$ parametrized by their expectations
\[
f(x;\t) = \frac{1}{(2 \pi)^{d/2}} \expp{-\norm{x - \t}^2/2} \quad \mbox{for }\t \in \R^d.
\]
Then, the importance sampling formula Eq.~\eqref{eq:is} writes as
\begin{equation}
  \label{eq:isgaussien}
  \E[\varphi(X)] = \E\left[ \varphi(Y) \frac{h(Y)}{f(Y;\theta)} \right] =
  \E\left[ \varphi(X+\theta) \expp{- X \cdot \t -\norm{\t}^2/2} \right].
\end{equation}
The second moment can be also written in a simplified way
\begin{equation}
  \label{eq:vgaussien}
  v(\t) = \E\left[ \left(\varphi(Y) \frac{h(Y)}{f(Y;\t)}\right)^2 \right] = \E\left[ \varphi(X)^2 \expp{- X \cdot \t +\norm{\t}^2/2} \right]
\end{equation}
where the last equality comes from the application of Eq.~\eqref{eq:isgaussien} with $-\t$. 
\begin{proposition}
  \label{prop:conv-gauss} Assume that for all $\theta \in \R^d$,
  $\E[\varphi(X)^2 \expp{-\theta \cdot X}] < +\infty$ and that $\P(\varphi(X) \neq 0) >0$. Then, the function $v$ is infinitely differentiable and strongly convex. Moreover, we can exchange derivation and expectation.
\end{proposition}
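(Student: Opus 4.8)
The plan is to exploit the factorization $v(\t) = \expp{\norm{\t}^2/2}\, w(\t)$, where $w(\t) = \E\left[\varphi(X)^2 \expp{-\t\cdot X}\right]$, to obtain smoothness by differentiating under the expectation sign, and then to read strong convexity directly off the resulting Hessian.

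First I would justify the exchange of derivation and expectation. Fix a multi-index $\alpha$; the formal $\alpha$-th $\t$-derivative of the integrand appearing in \eqref{eq:vgaussien} is (up to the $\expp{\norm{\t}^2/2}$ factor) $\varphi(X)^2(-X)^\alpha\expp{-\t\cdot X}$, so I only need, for each $\t_0$ and each $\alpha$, an integrable bound uniform for $\t$ in a ball $B(\t_0, r)$. On that ball $\labs{(\t-\t_0)\cdot X}\le r\norm{X}$ and $\norm{X}^{\labs\alpha}\expp{r\norm{X}}\le C\expp{2r\norm{X}}$, so it suffices to dominate $\varphi(X)^2\expp{2r\norm{X}}\expp{-\t_0\cdot X}$. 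The decisive step is the elementary bound $\expp{2r\norm{X}} \le \prod_{i=1}^d\left(\expp{2rX_i}+\expp{-2rX_i}\right) = \sum_{\varepsilon\in\{-1,1\}^d}\expp{2r\,\varepsilon\cdot X}$, a finite sum, which reduces the dominating function to $\sum_\varepsilon \varphi(X)^2\expp{-(\t_0 - 2r\varepsilon)\cdot X}$. Each term has finite expectation because the standing hypothesis holds for \emph{every} element of $\R^d$, in particular for $\t_0 - 2r\varepsilon$. The standard dominated-convergence theorem for differentiation under the integral then applies, and iterating over $\labs\alpha$ yields that $v$ is infinitely differentiable with all derivatives obtained inside the expectation.

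Next I would compute the Hessian from this exchange and bound $v$ below. Writing $G(\t) = \expp{-X\cdot\t + \norm{\t}^2/2}$, one has $\nabla_\t G = (\t - X)G$ and $\nabla^2_\t G = \left(I + (\t - X)(\t - X)^\top\right)G$, hence
\[
\nabla^2 v(\t) = v(\t)\, I + \E\left[\varphi(X)^2 (\t - X)(\t - X)^\top G(\t)\right].
\]
The second matrix is an expectation of positive semidefinite matrices, hence positive semidefinite, so $\xi^\top \nabla^2 v(\t)\,\xi \ge v(\t)\,\norm{\xi}^2$ for every $\xi\in\R^d$. To make this useful I set $c = \E[\varphi(X)^2] = w(0)$, which is finite by the hypothesis at $\t = 0$ and strictly positive since $\P(\varphi(X)\neq 0)>0$; Jensen's inequality under the probability measure $\varphi(X)^2\,\P_X/c$ then gives $w(\t)\ge c\,\expp{-\bar x\cdot\t}$ with $\bar x = \E[\varphi(X)^2 X]/c$ (finite by the same domination as above). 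Therefore $v(\t) \ge c\,\expp{\norm{\t}^2/2 - \bar x\cdot\t}\ge c\,\expp{-\norm{\bar x}^2/2} =: m > 0$ uniformly in $\t$, and combining this with the Hessian bound gives $\xi^\top\nabla^2 v(\t)\,\xi \ge m\,\norm{\xi}^2$ for all $\t,\xi$, which is exactly strong convexity.

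The main obstacle is the locally uniform domination in the first step: the hypothesis is only an integrability assumption at each fixed $\t$, so the real work lies in converting the exponential factor $\expp{r\norm{X}}$ produced by letting the parameter range over a ball into a finite combination of the admissible integrands $\varphi(X)^2\expp{-\t\cdot X}$, which is precisely what the $\sum_\varepsilon$ bound accomplishes. Once that is in place, the Hessian computation and the Jensen lower bound are routine.
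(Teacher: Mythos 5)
Your proof is correct, and it is worth noting that the paper itself offers no proof of this proposition: it is stated bare, the formulas \eqref{eq:nablav}--\eqref{eq:nabla2v} are then asserted, and the proofs of the neighbouring convergence results are deferred to \cite{jour:lelo:09} and \cite{Rubinstein}. Your argument is a valid self-contained substitute. The two key steps both hold up: the domination
\[
\expp{2r\norm{X}} \le \sum_{\varepsilon\in\{-1,1\}^d}\expp{2r\,\varepsilon\cdot X}
\]
correctly converts the hypothesis ``finite for \emph{every} $\theta$'' into a locally uniform integrable bound (this is exactly the point where a hypothesis at a single $\theta$ would not suffice), and your Hessian identity reproduces \eqref{eq:nabla2v}, so that $\nabla^2 v(\t) \succeq v(\t) I_d$ and everything reduces to a uniform positive lower bound on $v$. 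Your Jensen argument under the tilted measure $\varphi(X)^2\,d\P/\E[\varphi(X)^2]$ delivers that bound, at the price of checking $\E[\varphi(X)^2\norm{X}]<\infty$. For comparison, the route taken in the cited reference \cite{jour:lelo:09} gets the same bound more cheaply by completing the square pointwise inside the expectation:
\[
\expp{-\t\cdot x + \norm{\t}^2/2} = \expp{\norm{\t - x}^2/2}\,\expp{-\norm{x}^2/2} \ge \expp{-\norm{x}^2/2},
\]
so that $v(\t) \ge \E\bigl[\varphi(X)^2\expp{-\norm{X}^2/2}\bigr] > 0$ uniformly in $\t$, with no extra moment condition and no auxiliary measure. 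Both arguments yield strong convexity with an explicit modulus; yours buys nothing extra here but is no less rigorous.
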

From this proposition, we can write
\begin{align}
  \label{eq:nablav}
  \nabla v(\t) & =  \E\left[ (\t - X) \varphi(X)^2 \expp{- X \cdot \t +\norm{\t}^2/2} \right] \\
  \label{eq:nabla2v}
  \nabla^2 v(\t) & =  \E\left[ (I_d + (\t - X)' (\t - X)) \varphi(X)^2 \expp{- X \cdot \t +\norm{\t}^2/2} \right].
\end{align}
Hence, the value $\t^*$ minimizing the variance is uniquely defined by $\nabla v(\t^*) = 0$. 
As the Hessian matrix is tractable at roughly the same cost as the gradient itself, one can implement a Newton algorithm with optimal descent direction to minimize $v$.

\subsection{Existing approaches in EDA}
\label{sec:others}

Importance Sampling is used with great success for many decades in multiple 
scientific fields, including structural safety engineering, aeronautics, communication networks, 
finance, nuclear safety. The world of EDA is actually rather behind, comparatively. 
IS with Gaussian mixtures has been used noticeably in the context of SRAM simulation 
(see~\cite{mixture_sram}).

The mathematics of the 
basic idea are rather straightforward, although some users have difficulties with this 
notion of using a modified distribution. The performance of any IS strategy heavily 
depends on the chosen instrumental distribution, and implementation must be very thorough. 
However, incorrect choice of the instrumental distribution
(as demonstrated in~\cite{hocevar}) or bad implementation can lead to poor performance, 
and even to an increase of the variance. 

However,   at   least   compared   to   the   field   of mechanical  engineering,  
the  relation  between  the  process parameters  and the  output  is  
more  often  mildly  non-linear  than  not.  Given  the difficulty,  many  
approaches  that yield  only  an  approximation  of the  probability  or  quantile  have  been  tried.  
Machine  Learning techniques  can  help  as  well.  
None  of  these  approaches  can  avoid the  curse  of  dimensionality  (see  for  example \cite{solido}, \cite{blockade}),  
and  the validity  of  the  approximated  confidence  intervals,  
when  they  can be estimated, is questionable.

\subsection{Algorithm for the mean shift approach in the Gaussian case}
\label{sec:saa-gaussian}

We consider the framework detailed in Section~\ref{sec:meanshift} relying on the variance criterion to determine the optimal parameter $\t^*$ uniquely defined by $\nabla v(\t^*) = 0$ thanks to Proposition~\ref{prop:conv-gauss}.  The idea of the algorithm is to replace all the expectations by empirical means and then deterministic optimization techniques.

\begin{algo}[Adaptive Sample Average Monte Carlo]\hfill
  \begin{enumerate}
    \item Draw $X_1, \dots, X_{n}$ $n$ i.i.d. samples following the normal distribution
    \item Compute  the minimizer $\t_{n}$ of
    \begin{equation*}
      v_{n}(\t) = \inv{n} \sum_{i=1}^{n}  \varphi^2(X_i) \expp{-\t \cdot X_i + \frac{\abs{\t}^2}{2}}
    \end{equation*}
    by solving $\nabla v_n(\t) = 0$ using Newton's method.
    \item Draw $\bar X_1, \dots, \bar X_{m}$ $m$ i.i.d. samples following the normal distribution and independent of $(X_1, \dots, X_{m})$.
    \item Compute the expectation $\E(\varphi(X))$ by Monte Carlo using this new sample
    \begin{equation*}
      M_{m}(\t_{n}) = \inv{m} \sum_{i=1}^{m} \varphi(\bar X_i+\t_{n}) \expp{-\t_{n} \cdot \bar X_i - \frac{\abs{\t{n}}^2}{2}}.
    \end{equation*}
\end{enumerate}
\end{algo}
The number of samples $n$ used for solving the optimization problem may differ from the number 
of samples $m$ used in the Monte Carlo procedure. The way to balance the number of samples between
these two steps has to be further studied. For $n$ large enough,  $v_n$ inherits its strong convexity
and differentiability from the ones of $v$. 

The convergence of the Sample Average approximation $\t_n$ is known to converge to $\t^\s$ and to satisfy a central limit theorem under integrability conditions. We introduce the following condition
\begin{equation}
  \exists \delta>0,\;\E[\varphi^{4 + \delta}(X)]<+\infty.
  \label{integ2}
\end{equation}

\begin{proposition}
  \label{prop:convtheta} Assume that for all $\theta \in \R^d$, $\E[\varphi(X)^2 \expp{-\theta \cdot X}] < +\infty$ and that $\P(\varphi(X) \neq 0) >0$. Then, $\t_n$ and $v_n(\t_n)$ converge a.s. to $\t^\star$ and $v(\t^\star)$ as $n\to\infty$. If, moreover, \eqref{integ2} holds, then $\sqrt{n}(\t_n-\t^\star) \xrightarrow[]{\cl}  \cn_{d'}(0,\Gamma)$ where 
  \[\Gamma=[\nabla^2v(\t^\star)]^{-1}\cov \left((\t^\star-X)\varphi^2(X)e^{-\t^\star\cdot X+ \frac{|\t^\star|^2}{2}}\right) [\nabla^2v(\t^\star)]^{-1}.\]
\end{proposition}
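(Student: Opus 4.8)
The plan is to read this as a standard sample average approximation (SAA) statement and to lean on convexity throughout. First I would record that, for each sample point, the map $\t \mapsto \varphi^2(X_i)\expp{-\t\cdot X_i + \norm{\t}^2/2}$ is convex in $\t$, being the exponential of the convex quadratic $-\t\cdot X_i + \norm{\t}^2/2$ weighted by the nonnegative factor $\varphi^2(X_i)$; hence every $v_n$ is convex, as is $v$ by Proposition~\ref{prop:conv-gauss}. By the strong law of large numbers $v_n(\t)\to v(\t)$ a.s. for each fixed $\t$, the integrability being supplied by the hypothesis $\E[\varphi(X)^2\expp{-\t\cdot X}]<+\infty$. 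Intersecting these a.s. events over a countable dense subset of $\R^d$ gives pointwise a.s. convergence on a dense set, which for finite convex functions upgrades to uniform convergence on every compact set. Strong convexity of $v$ then forces the minimizers to converge: any limit point of $\t_n$ minimizes $v$ and so equals the unique $\t^\star$, while coercivity keeps $\t_n$ from escaping to infinity. The convergence $v_n(\t_n)\to v(\t^\star)$ follows by splitting into $v_n(\t_n)-v(\t_n)$, controlled by uniform convergence, and $v(\t_n)-v(\t^\star)$, controlled by continuity.

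For the central limit theorem I would use the Z-estimator expansion. Since $\t_n$ solves $\nabla v_n(\t_n)=0$ and $\nabla v(\t^\star)=0$, a componentwise mean value theorem (one intermediate point per coordinate of the gradient, all between $\t_n$ and $\t^\star$) yields a matrix $\nabla^2 v_n(\tilde\t_n)$ with
\[
\sqrt{n}\,(\t_n - \t^\star) = -[\nabla^2 v_n(\tilde\t_n)]^{-1}\,\sqrt{n}\,\nabla v_n(\t^\star).
\]
By Eq.~\eqref{eq:nablav}, $\nabla v_n(\t^\star)$ is an empirical mean of the i.i.d.\ centered vectors $(\t^\star - X_i)\varphi^2(X_i)\expp{-\t^\star\cdot X_i + \norm{\t^\star}^2/2}$, so the multivariate CLT gives $\sqrt{n}\,\nabla v_n(\t^\star)\xrightarrow{\cl}\cn(0,\Sigma)$ with $\Sigma = \cov\!\left((\t^\star-X)\varphi^2(X)\expp{-\t^\star\cdot X+\norm{\t^\star}^2/2}\right)$.

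The main obstacle is to verify that this covariance is finite, and this is exactly where the reinforced moment condition \eqref{integ2} is used. The square of the summand is bounded, up to the constant $\expp{\norm{\t^\star}^2}$, by $\norm{\t^\star-X}^2\varphi^4(X)\expp{-2\t^\star\cdot X}$; I would bound its expectation by Hölder's inequality, isolating $\E[\varphi^{4+\delta}(X)]$ on one factor and a Gaussian expectation of the form $\E[(\norm{\t^\star-X}^2\expp{-2\t^\star\cdot X})^{(4+\delta)/\delta}]$ on the other. The latter is finite because the standard Gaussian has all polynomial and exponential moments, so \eqref{integ2} precisely supplies the square-integrability the CLT needs. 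I expect this bookkeeping with the exponential weights to be the most delicate part.

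It remains to treat the Hessian factor. Differentiating the summands once more and arguing as in the first paragraph, again via convexity and the SLLN together with Eq.~\eqref{eq:nabla2v}, gives $\nabla^2 v_n\to\nabla^2 v$ uniformly on compacts; combined with $\tilde\t_n\to\t^\star$ a.s.\ and the continuity of $\nabla^2 v$, this yields $\nabla^2 v_n(\tilde\t_n)\to\nabla^2 v(\t^\star)$, which is invertible by strong convexity, whence $[\nabla^2 v_n(\tilde\t_n)]^{-1}\to[\nabla^2 v(\t^\star)]^{-1}$ by continuity of matrix inversion. Slutsky's theorem then gives $\sqrt{n}\,(\t_n-\t^\star)\xrightarrow{\cl}\cn(0,\Gamma)$ with $\Gamma=[\nabla^2 v(\t^\star)]^{-1}\Sigma[\nabla^2 v(\t^\star)]^{-1}$, the sign squaring out and the symmetry of the Hessian reproducing exactly the stated sandwich covariance.
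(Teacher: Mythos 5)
Your proof is correct in structure, and it is essentially the standard sample-average-approximation argument: this is also the route of the paper, which does not prove the proposition in-text but defers to \cite{jour:lelo:09} and \cite[]{Rubinstein}, where exactly this strategy (convexity plus uniform laws of large numbers for the almost sure part, a Z-estimator expansion of $\nabla v_n$ around $\t^\star$ plus the CLT for the centered score $(\t^\star - X_i)\varphi^2(X_i)\expp{-\t^\star\cdot X_i + \norm{\t^\star}^2/2}$ for the normality) is carried out. Your Hölder computation showing that \eqref{integ2} makes this score square-integrable, with the Gaussian factor $\E\bigl[\bigl(\norm{\t^\star-X}^2\expp{-2\t^\star\cdot X}\bigr)^{(4+\delta)/\delta}\bigr]$ finite, is precisely the role that assumption plays, and your handling of the componentwise mean-value points is harmless since each row of the resulting matrix converges to the corresponding row of $\nabla^2 v(\t^\star)$.

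One step is mis-justified, although its conclusion is true: you claim $\nabla^2 v_n \to \nabla^2 v$ locally uniformly ``again via convexity''. Convexity does upgrade pointwise a.s.\ convergence to locally uniform convergence for $v_n$ itself, and standard convex analysis (e.g.\ Rockafellar's theorem on gradients of convergent convex functions) even gives locally uniform convergence of $\nabla v_n$; but the entries of $\nabla^2 v_n$ displayed in Eq.~\eqref{eq:nabla2v} are \emph{not} convex functions of $\t$, so this argument does not reach the Hessian. What is needed there is a locally uniform strong law of large numbers with an integrable envelope: on a compact set $K$ with $R = \max_{\t\in K}\norm{\t}$ one can bound $\sup_{\t \in K}\expp{-\t\cdot X} \le \sum_{s\in\{-1,1\}^d}\expp{R\, s\cdot X}$, and the hypothesis $\E[\varphi(X)^2 \expp{-\theta \cdot X}] < +\infty$ for \emph{all} $\theta$, combined with Cauchy--Schwarz to absorb the polynomial factor $1+\norm{X}^2$, produces such an envelope; then \cite[Lemma A1]{Rubinstein} or \cite[Corollary 7.10, page 189]{ledouxtalagrand} --- the very tool the paper invokes later for the convergence of $\gamma_n^2$ --- yields the locally uniform convergence of $\nabla^2 v_n$, and with $\tilde\t_n \to \t^\star$ a.s.\ and continuity of matrix inversion at the positive definite limit $\nabla^2 v(\t^\star)$, the rest of your Slutsky argument goes through unchanged.
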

We refer to \cite{jour:lelo:09} and \cite{Rubinstein} for a proof of this result. \\
The convergence of the sequence $M_m(\t_n)_{m,n}$ is governed by the following theorem.  We can state the following result.
\begin{proposition}
  \label{prop:is-tcl}
  Let $n(m)$ be an increasing function of $m$ tending to infinity with $m$.  We assume the Assumptions of Proposition~\ref{prop:convtheta}. The following results hold.
  \begin{enumerate}[(i.)]
    \item The estimator $M_m(\t_{n(m)})$ is an unbiased and convergent estimator of $\E[\varphi(X)]$;
    \item If the function $\varphi$ is almost everywhere continuous and for some $\eta>0$ and some compact neighborhood ${\mathcal V}$ of $\t^*$, $ \sup_{\t \in {\mathcal
      V}} \E\left[ |\varphi(X + \t)|^{2+\eta} \right] < +\infty$, then $\sqrt{m} (M_m(\t_{n(m)}) - \E[\varphi(X)]) \xrightarrow[]{\cl}  {\cal N}_{d}(0,(\sigma^\s)^2)$ where
      \[
      (\sigma^\s)^2 = \Var \left(\varphi(X + \t^\s) \expp{- \t^\s \cdot X - \frac{|\t^\s|^2}{2}} \right) = v(\t^*) - \E[\varphi(X)]^2.
      \]
  \end{enumerate}
\end{proposition}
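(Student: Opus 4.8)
The plan is to exploit the structural fact, visible from Eq.~\eqref{eq:isgaussien}, that for \emph{every} fixed $\t$ the shifted estimator is already unbiased. Writing $\psi(x,\t) = \varphi(x+\t)\expp{-\t\cdot x - \abs{\t}^2/2}$, so that $M_m(\t) = \inv{m}\sum_{i=1}^m \psi(\bar X_i,\t)$, Eq.~\eqref{eq:isgaussien} says $g(\t) := \E[\psi(\bar X,\t)] = \E[\varphi(X)]$ for all $\t$, i.e.\ the map $g$ is constant. The second ingredient is that the Monte Carlo sample $(\bar X_i)_i$ is independent of the learning sample, hence of $\t_{n(m)}$. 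For the unbiasedness in (i.) I would condition on $\t_{n(m)}$: by independence $\E[M_m(\t_{n(m)}) \mid \t_{n(m)}] = g(\t_{n(m)}) = \E[\varphi(X)]$ almost surely, and taking expectations concludes.

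For the consistency in (i.) I would establish a uniform strong law of large numbers (a Glivenko--Cantelli property) for the family $\{\psi(\cdot,\t) : \t\in\cv\}$ over a fixed compact neighbourhood $\cv$ of $\t^\s$, giving $\sup_{\t\in\cv}\abs{M_m(\t) - \E[\varphi(X)]} \to 0$ a.s.\ along the Monte Carlo sample. Since $\t_{n(m)} \to \t^\s$ a.s.\ by Proposition~\ref{prop:convtheta}, the estimate $\t_{n(m)}$ lies in $\cv$ eventually, and conditioning on the learning sample (independence again) yields $M_m(\t_{n(m)}) \to \E[\varphi(X)]$ a.s.

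For the central limit theorem in (ii.) I would split, with $G_m(\t) := \sqrt{m}(M_m(\t) - \E[\varphi(X)])$,
\[
\sqrt{m}\bigl(M_m(\t_{n(m)}) - \E[\varphi(X)]\bigr) = G_m(\t^\s) + \bigl(G_m(\t_{n(m)}) - G_m(\t^\s)\bigr).
\]
The first term $G_m(\t^\s)$ is the normalised sum of the i.i.d.\ centred variables $\psi(\bar X_i,\t^\s) - \E[\varphi(X)]$, so the classical CLT gives $G_m(\t^\s) \xrightarrow{\cl} \cn_d(0,(\sigma^\s)^2)$ with $(\sigma^\s)^2 = \Var(\psi(X,\t^\s)) = \E[\psi(X,\t^\s)^2] - \E[\varphi(X)]^2 = v(\t^\s) - \E[\varphi(X)]^2$; here a change of variables identifies $\E[\psi(X,\t^\s)^2]$ with $v(\t^\s)$ as given by Eq.~\eqref{eq:vgaussien}, which is finite by the hypothesis $\E[\varphi(X)^2\expp{-\t\cdot X}]<+\infty$.

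The heart of the argument, and the step I expect to carry all the difficulty, is to show that the remainder $G_m(\t_{n(m)}) - G_m(\t^\s)$ vanishes in probability. The decisive point is that $g$ is constant, so $\nabla g(\t^\s)=0$: the mean is flat in $\t$, which is exactly why the fluctuations of the data-driven parameter $\t_{n(m)}$ do not enter the first-order asymptotics. I would formalise this as stochastic equicontinuity of the empirical process $G_m$ at $\t^\s$. Under the a.e.\ continuity of $\varphi$ (which renders $\t\mapsto\psi(x,\t)$ continuous for a.e.\ $x$) together with the uniform moment bound $\sup_{\t\in\cv}\E[\abs{\varphi(X+\t)}^{2+\eta}]<+\infty$ (which furnishes a square-integrable envelope with the extra integrability needed), the class $\{\psi(\cdot,\t):\t\in\cv\}$ is Donsker, whence $\sup_{\abs{\t-\t^\s}\le\delta}\abs{G_m(\t)-G_m(\t^\s)} \to 0$ letting $m\to\infty$ and then $\delta\to0$. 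Using independence of the two samples to freeze $\t_{n(m)}$ by conditioning on the learning sample, and the a.s.\ convergence $\abs{\t_{n(m)}-\t^\s}\to0$, the remainder is $o_{\P}(1)$; Slutsky's lemma then yields the stated limit. I emphasise that the equicontinuity route needs only the consistency of $\t_{n(m)}$ already provided by Proposition~\ref{prop:convtheta}, whereas a crude Taylor expansion of $M_m$ around $\t^\s$ would instead impose a growth condition such as $\sqrt{m}/n(m)\to0$ on the schedule $n(m)$.
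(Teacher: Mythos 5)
Your overall architecture coincides with the paper's: the same decomposition $\sqrt{m}\,(M_m(\t_{n(m)})-\E[\varphi(X)]) = G_m(\t^\s) + \bigl(G_m(\t_{n(m)})-G_m(\t^\s)\bigr)$, the classical CLT for $G_m(\t^\s)$ with the limit variance identified through Eq.~\eqref{eq:vgaussien}, the observation that the constancy of $\t\mapsto\E[\psi(\bar X,\t)]$ is what prevents the fluctuations of $\t_{n(m)}$ from entering the first-order asymptotics, and the unbiasedness in \emph{(i.)} by conditioning on the learning sample. The divergence --- and the genuine gap --- is in how you kill the remainder. You rest it on the claim that a.e.\ continuity of $\varphi$ plus the uniform $(2+\eta)$-moment envelope makes the class $\{\psi(\cdot,\t):\t\in\cv\}$ Donsker, hence $G_m$ stochastically equicontinuous at $\t^\s$. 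That implication is false in general: pointwise continuity in the parameter over a compact set together with an integrable envelope yields a uniform law of large numbers (Glivenko--Cantelli --- which is what your consistency argument in \emph{(i.)} needs, and note that even there you are borrowing the continuity hypothesis that is only assumed in \emph{(ii.)}), but it does not yield a uniform CLT. Equicontinuity requires quantitative control of bracketing or covering numbers, typically an $L^2$-H\"older condition $\abs{\psi(x,\t_1)-\psi(x,\t_2)}\le L(x)\abs{\t_1-\t_2}^\alpha$ with $\E[L(X)^2]<\infty$. No such condition is available here, and for the indicator payoffs $\varphi=\ind{h\ge\gamma}$ that the paper actually targets it does not follow from the stated hypotheses: the increment $\psi(x,\t_1)-\psi(x,\t_2)$ remains of order one on the symmetric difference of the sets $\{h(\cdot+\t_1)\ge\gamma\}$ and $\{h(\cdot+\t_2)\ge\gamma\}$ however close $\t_1$ and $\t_2$ are, so equicontinuity hinges on the geometry of the level sets of $h$, about which nothing is assumed.

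The irony is that the independence you invoke only to ``freeze'' $\t_{n(m)}$ is precisely what makes any uniform-in-$\t$ result unnecessary; this is the paper's route, and it repairs your proof with tools you already have in hand. Conditionally on the learning sample, $M_m(\t_{n(m)})-M_m(\t^\s)$ is an average of $m$ i.i.d.\ variables which are \emph{centered} (again because the mean is constant in $\t$), so its conditional second moment equals $\frac1m\Var\bigl(\psi(\bar X,\t)-\psi(\bar X,\t^\s)\bigr)$ evaluated at the frozen point $\t=\t_{n(m)}$. After multiplying by $m$, everything reduces to showing that the deterministic function $\t\mapsto\E\bigl[\abs{\psi(\bar X,\t)-\psi(\bar X,\t^\s)}^2\bigr]$ tends to $0$ along $\t_{n(m)}\to\t^\s$: restrict to the event $\{\t_{n(m)}\in\cv\}$, whose probability tends to one by Proposition~\ref{prop:convtheta}, use a.e.\ continuity of $\varphi$ for a.s.\ convergence of the integrand, and use H\"older's inequality with the $(2+\eta)$-moment bound for uniform integrability; Chebyshev's inequality then gives $G_m(\t_{n(m)})-G_m(\t^\s)=o_{\P}(1)$, and Slutsky concludes. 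This is exactly the paper's argument, up to its additional truncation on $\{\abs{\t_{n(m)}-\t^\s}\le n(m)^{-1/4}\}$. Conversely, had your Donsker claim been available, the independence of the two samples would have been superfluous (equicontinuity plus consistency of $\t_{n(m)}$ suffices); as written, however, the load-bearing step of your part \emph{(ii.)} is unsupported.
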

\begin{proof}
  The proof of \emph{(i.)} can be found in~\cite{is-jump}. \\

  We only give the guidelines to prove \emph{(ii.)}.
  \begin{equation*}
    \sqrt{n} ( M_n (\theta_{n(m)}) - \E[\varphi(X)]) = \sqrt{n} ( M_{n}(\t_\s) - \E[\varphi(X)]) + \sqrt{n} ( M_{n} (\t_{n(m)}) -  M_{n}(\t_\s)) 
  \end{equation*}
  From the standard central limit theorem, $\sqrt{n} ( M_{n}(\t_\s) - \E[\varphi(X)]) \xrightarrow[n \rightarrow +\infty]{\cl} \cn(0, v(\t_\s) - \E[\varphi(X)]^2)$. Therefore, it is sufficient to prove that $\sqrt{n} ( M_{n}(\t_{n(m)}) -  M_{n}(\t_\s)) \xrightarrow[n \rightarrow +\infty]{\P} 0$. Let $\varepsilon>0$.
  \begin{multline}
    \label{eq:tcl-split}
    \P\left( \sqrt{n} \abs{ M_{n}(\t_{n(m)}) -  M_{n}(\t_\s)} > \varepsilon \right) \le  \P( n(m)^{1/4} \abs{(\t_{n(m)}) - (\t_\s)} > 1) \\ + \frac{n}{\varepsilon^2} \E\left[ \abs{ M_{n}(\t_{n(m)}) -  M_{n}(\t_\s)}^2 \ind{\abs{\t_{n(m)} - \t_\s} \le n(m)^{-1/4} } \right].
  \end{multline}
  We deduce from Proposition~\ref{prop:convtheta}, that $\P( n(m)^{1/4}\abs{\t_{n(m)} - \t_\s} > 1) \longrightarrow 0$.  We introduce
  \begin{align*}
    Q(\t) = \expp{- \t \cdot \bar X^1 - \frac{|\t|^2}{2}}.
  \end{align*}
  Conditionally on $\t_{n(m)}$, $ M_{n}(\t_{n(m)}) -  M_n(\t_\s)$ is a sum of i.i.d. centered random variables. Then, it is sufficient to monitor differences as 
  \begin{align*}
    \abs{ \varphi(\bar X^1 +\t_\s) Q(\t_\s) - \varphi(\bar X^1 +\t_{n(m)}) Q(\t_{n(m)})}^2 
  \end{align*}
  on the set $\left\{\abs{\t_{n(m)} - \t_\s} \le n(m)^{-1/4} \right\}$, which is a subset of $\cv$ for large enough $n$.  As $\phi$ is almost everywhere continuous, we deduce that the above difference a.s. goes to $0$ when $n$ tends to infinity. Let $\delta < \eta$, the conditional independence combined with Hölder's inequality yield that for large enough $n$
  \begin{align*}
    & \E\left[ \abs{\varphi(\bar X^1 +\t_{n(m)}) Q(\t_{n(m)})}^{2(1+\delta)} \ind{\t_{n(m)} \in \cv} \right] \\
    & \le \sup_{\t \in \cv} \E\left[ \abs{\varphi(\bar X^1 +\t) Q(\t)}^{2(1+\delta)} \right] \\
    & \le \sup_{(\t, \lambda) \in \cv} \E\left[ \abs{\varphi(\bar X^1 +\t)}^{2(1+\eta)}\right]^{\frac{1+\delta}{1+\eta}} 
    \E\left[ Q(\t)^{\frac{2(1+\delta)(1+\eta)}{\eta-\delta}} \right]^{\frac{\eta-\delta}{1+\eta}}.
  \end{align*}
  From the local boundedness of $\t \longmapsto \E[\varphi(X+\t)]$, we deduce that
  \begin{align*}
    \sup_n \E\left[ \abs{\varphi(\bar X^1 +\t_{n(m)}) Q(\t_{n(m)})}^{2(1+\delta)} \right]  < \infty,
  \end{align*}
  which proves the uniform integrability of the family $(|\varphi(\bar X^1 +\t_{n(m)}) Q(\t_{n(m)})|^{2})_n$. Then, we deduce that the second term in~\eqref{eq:tcl-split} tends to zero.
\end{proof}
An immediate consequence of this result is that the accuracy of a  Monte Carlo algorithm based on $M_m(\t_{n(m)})$ can be monitored using the standard theory of confidence intervals. \\

A key issue from a practical and computational point of view is to fix $n(m)$ for a given value of $m$. As $n(m)$ has not impact on the convergence rate but only on how the empirical variance is close to the optimal variance. Hence, we are actually more interested in the convergence of $v_n(\t_n)$ to $v(\t^\s)$ than in the one of $\t_n$ itself. Monitoring the convergence of $v_n(\t_n)$ would give us a hint on how to choose $n(m)$.

\begin{proposition}
  \label{prop:tclvnthetan}
  Under the assumptions of Proposition~\ref{prop:convtheta},
  $\sqrt{n}(v_n(\t_n)-v(\t^\star)) \xrightarrow[]{\cl}  {\cn}(0,(\gamma^\s)^2)$ where
 \[(\gamma^\s)^2=\cov \left(\varphi^2(X)e^{-\t^\star\cdot X+ \frac{|\t^\star|^2}{2}}\right).\]
\end{proposition}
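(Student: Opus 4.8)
The plan is to split the error into a part that carries the Gaussian fluctuation and a part coming from the estimation of the optimiser, the latter being negligible at the $\sqrt n$ scale. I would start from the decomposition
\[
\sqrt n\bigl(v_n(\t_n)-v(\t^\star)\bigr)=\sqrt n\bigl(v_n(\t^\star)-v(\t^\star)\bigr)+\sqrt n\bigl(v_n(\t_n)-v_n(\t^\star)\bigr),
\]
and analyse the two terms separately.

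The first term is a genuine empirical mean: since
\[
v_n(\t^\star)-v(\t^\star)=\inv n\sum_{i=1}^n\Bigl(\varphi^2(X_i)\expp{-\t^\star\cdot X_i+\frac{|\t^\star|^2}{2}}-v(\t^\star)\Bigr),
\]
it is an average of centred i.i.d. variables. Their variance $\Var\bigl(\varphi^2(X)\expp{-\t^\star\cdot X+|\t^\star|^2/2}\bigr)=(\gamma^\s)^2$ is finite: writing it via $\E[\varphi^4(X)\expp{-2\t^\star\cdot X}]\expp{|\t^\star|^2}$ minus the squared mean, Hölder's inequality together with \eqref{integ2} (which provides $\E[\varphi^{4+\delta}(X)]<\infty$) and the finiteness of every exponential moment of the Gaussian vector $X$ yield $\E[\varphi^4(X)\expp{-2\t^\star\cdot X}]<\infty$. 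The ordinary central limit theorem then gives
\[
\sqrt n\bigl(v_n(\t^\star)-v(\t^\star)\bigr)\xrightarrow{\cl}\cn\bigl(0,(\gamma^\s)^2\bigr),
\]
which is exactly the announced limit.

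For the second term I would exploit that $\t_n$ is the exact minimiser of the (for $n$ large, strongly convex and smooth) empirical criterion $v_n$, so that $\nabla v_n(\t_n)=0$. A second order Taylor expansion of $v_n$ around $\t_n$ gives, for some $\xi_n$ on the segment joining $\t_n$ and $\t^\star$,
\[
v_n(\t^\star)=v_n(\t_n)+\nabla v_n(\t_n)\cdot(\t^\star-\t_n)+\tfrac12(\t^\star-\t_n)'\nabla^2v_n(\xi_n)(\t^\star-\t_n),
\]
and the first order term vanishes, whence
\[
\sqrt n\,\bigl|v_n(\t_n)-v_n(\t^\star)\bigr|\le\tfrac12\Norm{\nabla^2v_n(\xi_n)}\;\sqrt n\,|\t_n-\t^\star|^2.
\]
By Proposition~\ref{prop:convtheta}, $\sqrt n(\t_n-\t^\star)$ converges in law, so $|\t_n-\t^\star|^2=O_{\P}(n^{-1})$ and the factor $\sqrt n\,|\t_n-\t^\star|^2=O_{\P}(n^{-1/2})$ tends to $0$ in probability; since also $\t_n\to\t^\star$ a.s., hence $\xi_n\to\t^\star$ a.s., it remains to check that $\Norm{\nabla^2v_n(\xi_n)}=O_{\P}(1)$. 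This second term therefore tends to $0$ in probability, and Slutsky's lemma combined with the first step yields the claim.

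The main obstacle is precisely the control of $\nabla^2v_n(\xi_n)$ at the random intermediate point $\xi_n$. One must upgrade the pointwise a.s. convergence $\nabla^2 v_n(\t)\to\nabla^2 v(\t)$ into a convergence uniform on a compact neighbourhood of $\t^\star$, so that $\nabla^2v_n(\xi_n)\to\nabla^2v(\t^\star)$ a.s. This is where the standing assumption $\E[\varphi^2(X)\expp{-\t\cdot X}]<\infty$ for all $\t$ does the work: using \eqref{eq:nabla2v}, the Hessian integrand $\varphi^2(X)(I_d+(\t-X)'(\t-X))\expp{-\t\cdot X+|\t|^2/2}$ is, uniformly over $\t$ in a compact set, dominated by $\varphi^2(X)$ times a polynomial in $X$ times $\expp{c|X|}$, and the latter envelope is integrable because $\expp{c|X|}$ is bounded by a finite sum of terms $\expp{\t\cdot X}$. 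This domination delivers the required local uniform law of large numbers, and the remainder of the argument is a routine delta-method computation whose only genuine simplification is that the stationarity $\nabla v_n(\t_n)=0$ kills the linear term, leaving only a quadratic remainder negligible at scale $\sqrt n$.
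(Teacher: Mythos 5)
Your proof is correct, and while it starts from the same decomposition as the paper ($\sqrt n(v_n(\t_n)-v(\t^\star)) = \sqrt n(v_n(\t_n)-v_n(\t^\star)) + \sqrt n(v_n(\t^\star)-v(\t^\star))$, with the standard CLT handling the second term), your treatment of the remainder term is genuinely different. The paper expands $v_n$ to \emph{first} order, writing $\sqrt n(v_n(\t_n)-v_n(\t^\star)) = \sqrt n\,\nabla v_n(\tilde\t_n)\cdot(\t^\star-\t_n)$, and kills this product by combining the tightness of $\sqrt n(\t_n-\t^\star)$ with the locally uniform convergence $\nabla v_n \to \nabla v$ (cited from the literature) and the population stationarity $\nabla v(\t^\star)=0$. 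You instead expand to \emph{second} order around $\t_n$ and invoke the \emph{empirical} stationarity $\nabla v_n(\t_n)=0$, so the linear term vanishes exactly and the remainder is bounded by $\tfrac12\Norm{\nabla^2 v_n(\xi_n)}\,\sqrt n\,\abs{\t_n-\t^\star}^2 = O_{\P}(n^{-1/2})$, needing only stochastic boundedness of the Hessian near $\t^\star$, which you obtain by a domination argument (the envelope $\varphi^2(X)(1+\abs{X}^2)\expp{c\abs{X}}$ is integrable since $\expp{c\abs{X}}$ is dominated by a finite sum of terms $\expp{\t\cdot X}$ and the standing assumption holds for all $\t$). The trade-off: the paper's route uses one less derivative and delegates the uniform convergence of $\nabla v_n$ to a citation, while yours is self-contained, yields the sharper remainder rate $O_{\P}(n^{-1/2})$ rather than mere $o_{\P}(1)$, and replaces the uniform-convergence lemma for the gradient by the same type of uniform law of large numbers applied to the Hessian (the very tool the paper itself invokes later for the consistency of $\gamma_n^2$). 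Your extra verification that $(\gamma^\star)^2<\infty$ via Hölder and \eqref{integ2} is a point the paper passes over silently, and it is indeed needed for the CLT step in both proofs.
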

\begin{proof}
  \begin{align*}
    \sqrt{n} (v_n(\t_n) - v(\t^\s)) & = \sqrt{n} (v_n(\t_n) - v_n(\t^\s)) + \sqrt{n} (v_n(\t^\s) - v(\t^\s))
  \end{align*}
  By the standard central limit theorem, $\sqrt{n} (v_n(\t^\s) - v(\t^\s))$ converges in distribution to a normal random variable with mean $0$ and variance $(\gamma^\s)^2$.

  To handle the term $\sqrt{n} (v_n(\t_n) - v_n(\t^\s))$, we use the following Taylor expansion
  \begin{align*}
    \sqrt{n} (v_n(\t_n) - v_n(\t^\s)) = \sqrt{n} \nabla v_n(\tilde \theta_n) (\t^\s - \t_n)
  \end{align*}
  where $\tilde \theta_n$ is a point on the segment joining $\t^\s$ and $\t_n$. Since $\nabla v_n$ converges locally uniformly to $\nabla v$ (see~\cite{jour:lelo:09}), $\nabla v_n(\tilde \theta_n)$ converges a.s. to $\nabla v(\t^\s) = 0$. Then, we deduce from the convergence in distribution of $\sqrt{n} (\t_n - \t^\s)$ (see Proposition~\ref{prop:convtheta}) that
  $\sqrt{n} (v_n(\t_n) - v_n(\t^\s))$ goes to zero in probability. 
\end{proof}
As usual, if we consider an estimator $\gamma_n^2$ of $(\gamma^\s)^2$, we obtain $\frac{\sqrt{n}}{\gamma_n} (v_n(\t_n)-v(\t^\star)) \xrightarrow[]{\cl}  {\cn}(0,1)$, which can be used in practice to determine the magnitude of $n$. A natural estimator of $\gamma_n$ can be computed online within the minimization algorithm
\begin{align*}
  \gamma_n^2 = \inv{n} \sum_{i=1}^{n} \varphi^4(X_i) \expp{-2 \t_n \cdot X_i
  + \abs{\t_n}^2} - v_n(\t_n)^2.
\end{align*}
The convergence of $\gamma_n^2$ to $(\gamma^\s)^2$ ensues from the locally uniform strong law of large numbers for the sequence of random functions $\t \mapsto \varphi^4(X_i) \expp{-2 \t \cdot X_i + \abs{\t}^2}$, see~\cite[Lemma A1]{Rubinstein} or \cite[Corollary 7.10, page 189]{ledouxtalagrand}.

\subsection{Using importance sampling for \emph{very} rare event simulation}

Assume the function $\varphi$ writes as an indicator function $\varphi(x) = \ind{h(x) \ge \gamma}$ with $x \in \R^d$ where $\gamma$ makes the event almost never occur in practice, $\P(h(X) \ge \gamma)$ is much smaller than $1/n$. Then, the empirical updating formulae are not well defined and we will never manage to compute the mixture parameters straightaway. In such a situation, \cite{MR2329305} suggest to use a ladder with finite number of levels to reach level $\gamma$. Consider some levels $\gamma_1, \dots, \gamma_K = \gamma$ where $K$ is a priori unknown but computed online. In this scheme, the samples used at step $k$ are drawn according to the importance sampling distribution obtained from step $k-1$, which ensures that the level $\gamma_k$ is never too rare. \\

In this section, we focus only the mean shift approach. The multilevel approach is based on a step by step update of the importance density thus ensuring the targeted event is never too rare. At step $k$, the samples are generated according to the importance sampling density computed at step $k-1$. Before stating the algorithm, we need to extend Eq.~\eqref{eq:isgaussien} to a wider setting.

Consider two $\R^d-$valued normal random vector $Y$ and $Z$ with densities $f(\cdot; \theta_Y)$ and $f(\cdot; \theta_Z)$ respectively for some $\t_Y, \t_Z \in \R^d$. Then, from Eq.~\eqref{eq:is}, we get
\begin{align}
  \label{eq:isgaussien_generic}
  \E[\varphi(Y)] &= \E\left[\varphi(Z) \frac{f(Z; \theta_Y)}{f(Z,
      \theta_Z)}\right] \\
  \label{eq:vargaussien_generic}
  \E\left[\left(\varphi(Z) \frac{f(Z; \theta_Y)}{f(Z, \theta_Z)}\right)^2\right]
  & = \E\left[\varphi(Y)^2 \frac{f(Y; \theta_Y)}{f(Y, \theta_Z)}\right].
\end{align}
These two formulae can be made more explicit by expanding the density functions
\begin{align*}
  \E[\varphi(Y)] &= \E\left[\varphi(Z) \expp{- (\t_Z - \t_Y) \cdot Z +
      \inv{2}(\abs{\t_Z}^2  - \abs{\t_Y}^2)}\right] \\
  \E\left[\left(\varphi(Z) \frac{f(Z; \theta_Y)}{f(Z, \theta_Z)}\right)^2\right]
  & = \E\left[\varphi(Y)^2 \expp{- (\t_Z - \t_Y) \cdot Y +  \inv{2}
      (\abs{\t_Z}^2 - \abs{\t_Y}^2)}\right].
\end{align*}

The Algorithm can be written as
\begin{algo}[Multilevel importance sampling]\hfill\newline
  \label{algo:splitting-is}
  Fix $\t^{(0)}_n = 0$ in $\R^d$.
  \begin{enumerate}
    \item Draw some samples $X_1^{(k+1)}, \dots, X_n^{(k+1)}$ according to the distribution of $X^{(k)} \sim f(\cdot; \t^{(k)}_n)$.

    \item Using the previous samples, compute the $\rho$ quantile of $h(X^{(k)})$ and set it to $\gamma_{k+1}$ provided it is less than $\gamma$; otherwise set $\gamma_{k+1} = \gamma$.

    \item Compute the variance criterion
      \begin{align*}
        v^{(k+1)}_n(\t) = \frac{1}{n} \sum_{i=1}^n \ind{h(X^{(k+1)}_j) \ge \gamma} \expp{-(\t - \t^{(k)}_n) \cdot X^{(k+1)}_j + (\abs{\t}^2 - \abs{\t^{(k)}_n}^2) /2 } 
      \end{align*}
      Then, $\t^{(k+1)}_n = \arg\min_\t v^{(k+1)}_n(\t)$. 
    \item If $\gamma_k < \gamma$, return to step~1 and proceed with $k \leftarrow k+1$.
    \item Draw some new samples $\bar X_1, \dots, \bar X_m$ following the standard normal distribution and independent of the past. Approximate $\E[\varphi(X)]$ by
      \begin{align*}
        S_{m, n} = \inv{m} \sum_{j=1}^m \ind{h(\bar X_j + \t^{(k+1)}) \ge \gamma} 
        \expp{- \inv{2}\abs{\t^{(k+1)}_n}^2 - \t^{(k+1)}_n \cdot X_j}
      \end{align*}
  \end{enumerate}
\end{algo}
Using intermediate levels has no theoretical impact on the convergence of 
the algorithm, it is a practical though essential trick to circumvent the 
difficulty of sampling a rare event. If the levels $(\gamma_k)_k$ are chosen 
so that the events $\{h(X^{(k+1)}_j) \ge \gamma_k\}$ are never too rare, 
then a fairly small number of samples $n$ can be used. We know from 
Proposition~\ref{prop:is-tcl} that the convergence rate of $S_{m,n}$ is 
monitored by $\sqrt{m}$ so the computational effort should be put on $m$.

\paragraph{Computation of $\t^{(k+1)}_n$.} The vector $\t^{(k+1)}_n$ is defined as the solution of a minimization problem, which is known to be strongly convex.  Hence, the solution can be computed using the Newton algorithm with optimal descent direction given for finding the root of the gradient.
\begin{align*}
  \nabla v^{(k+1)}_n(\t) & = \frac{1}{n} \sum_{i=1}^n \ind{h(X^{(k+1)}_j) \ge \gamma} (\t - X^{(k+1)}_j) \expp{-(\t - \t^{(k)}_n) \cdot X^{(k+1)}_j + (\abs{\t}^2 - \abs{\t^{(k)}_n}^2) /2 } \\
  \nabla^2 v^{(k+1)}_n(\t) & = \frac{1}{n} \sum_{i=1}^n \ind{h(X^{(k+1)}_j) \ge \gamma} \left( I + (\t - X^{(k+1)}_j)(\t - X^{(k+1)}_j)'\right) \expp{-(\t - \t^{(k)}_n) \cdot X^{(k+1)}_j + (\abs{\t}^2 - \abs{\t^{(k)}_n}^2) /2 }.
\end{align*}
When $n$ goes to infinity, the Hessian matrix $\nabla^2 v^{(k+1)}_n(\t)$ converges to 
\begin{align*}
  \E\left[\ind{h(X^{(k+1)}) \ge \gamma} \left( I + (\t - X^{(k+1)})(\t - X^{(k+1)})'\right) \expp{-(\t - \t^{(k)}_n) \cdot X^{(k+1)} + (\abs{\t}^2 - \abs{\t^{(k)}_n}^2) /2 } \right]
\end{align*}
whose smallest eigenvalue is larger than
\begin{align*}
  & \E\left[\ind{h(X^{(k+1)}) \ge \gamma} \expp{-(\t - \t^{(k)}_n) \cdot X^{(k+1)} + (\abs{\t}^2 - \abs{\t^{(k)}_n}^2) /2 } \right] \\
  & \quad =   \E\left[\ind{h(X^{(k+1)}) \ge \gamma} \expp{-(\t - \t^{(k)}_n) \cdot X^{(k+1)}} \right]  \E\left[ \expp{(\t - \t^{(k)}_n) \cdot X^{(k+1)}}  \right] \ge   \E\left[\ind{h(X^{(k+1)}) \ge \gamma} \right]
\end{align*}
since
\begin{align*}
  \E\left[ \expp{(\t - \t^{(k)}_n) \cdot X^{(k+1)}} \right] = \expp{\inv{2} \abs{\t - \t^{(k)}_n}^2 + (\t - \t^{(k)}_n) \cdot \t^{(k)}_n} = \expp{\inv{2} (\abs{\t}^2  - \abs{\t^{(k)}_n}^2)}.
\end{align*}
Depending on the choice of the levels $(\gamma_k)_k$, the lower bound $\E\left[\ind{h(X^{(k+1)}) \ge \gamma} \right]$ can become quite small, which may lead to a badly performing Newton algorithm as the smallest eigenvalue of the Hessian matrix measures the convexity of the function and the less convex the function is, the slower the Newton algorithm converges. To circumvent this problem, we suggest to consider an alternative characterization of $\t^{(k+1)}_n$. The equation $\nabla v^{(k+1)}_n(\t^{(k+1)}_n) = 0$ can be rewritten
\begin{align*}
  \t^{(k+1)}_n - \frac{ \frac{1}{n} \sum_{i=1}^n \ind{h(X^{(k+1)}_j) \ge \gamma} X^{(k+1)}_j \expp{-(\t^{(k+1)}_n - \t^{(k)}_n) \cdot X^{(k+1)}_j }}{ \frac{1}{n} \sum_{i=1}^n \ind{h(X^{(k+1)}_j) \ge \gamma} \expp{-(\t^{(k+1)}_n - \t^{(k)}_n) \cdot X^{(k+1)}_j }} = 0.
\end{align*}
If we introduce the function $u^{(k+1)}_n$ defined by
\begin{align*}
  u^{(k+1)}_n(\t) = \inv{2} \abs{\t}^2 + \log\left( \frac{1}{n} \sum_{i=1}^n \ind{h(X^{(k+1)}_j) \ge \gamma} X^{(k+1)}_j \expp{-(\t - \t^{(k)}_n) \cdot X^{(k+1)}_j } \right) 
\end{align*}
then $\t^{(k+1)}_n$ can be seen as the root of $\nabla u^{(k+1)}_n(\t)$ and we can 
prove that the Hessian matrix $\nabla^2 u^{(k+1)}_n(\t)$ is uniformly lower bounded by $I_d$. 
We refer the reader to \cite{jour:lelo:09} for more details on this subtle transformation of the 
original problem to ensure a minimum convexity.

Note that in practice, one needs to solve the linear systems $\nabla^2 u^{(k+1)}_n(\t) \,\delta = -\nabla u^{(k+1)}_n(\t)$ 
at each iteration of the Newton algorithm. The Hessian matrix being a dense matrix, it is more interesting to use an 
iterative algorithm such as the conjugate gradient algorithm. 

The number $d$ of input variables may also be high. This dimension is directly linked to number of electronic
devices on a given circuit and specifically to the number of transistors. However this dimension does not 
impact the theoretical convergence properties of the approach. As this dimension grows, it becomes difficult 
to make sufficient progress during the multilevel algorithm. When the budget of simulation is limited, we experienced non 
convergence (and therefore failure) of the Importance Sampling algorithm if the number of 
variables $d$ is closed to $10^5$ variables. A simple idea is to look for the 
importance sampling parameter in a subspace $\left\{ A \vartheta \,:\, \vartheta \in \R^{d^\prime} \right\}$ where 
$A \in \R^{d \times d^\prime}$ is matrix with rank $d^\prime \le d$. 
In our situation, the matrix  $A$ needs to be identified automatically, and is a submatrix of the identity 
matrix after removing its $d-d^\prime$ columns corresponding to the 
unimportant variables. Our approach falls into the category of sequential deterministic methods. It 
produces the same subset on a given problem every time, by starting with a single solution (a variable subset) 
and iteratively add or remove dimensions until some termination criterion is met. This approach is clearly 
suboptimal, there is no guarantee for finding the optimal subset solution (because we do not examine all 
possible subsets). We made some tests to validate the approach with large scale problems see section~\ref{sec:large-scale}.

\section{Computing a default probability and its expected shortfall}
\label{sec:saa-default}

In this section, we assume that the function $\varphi$ is given by $\varphi(x) = \ind{h(x) \ge \gamma}$ 
where $h : \R^d \to \R$ and $a \in \R$.  We consider the problem of both estimating $\P(h(X) \ge \gamma)$ and 
$\E[h(X) | h(X) \ge \gamma]$, where $X$ is a standard normal random vector in $\R^d$. The quantity 
$\E[h(X) | h(X) \ge \gamma]$ is called conditional value at risk ($\Cvar$) or expected shortfall and 
we denote it by $\Cvar(h(X))$. The $\Cvar$ can be seen as a mesure of the dispersion of the distribution tail. 
In particular, a $\Cvar$ close to $\gamma$ means that the fail circuits are very concentrated and justifies 
the practitioners' habit to pick a single fail circuit at random when trying to analyse the reasons of a failure.
We will give an practtical example in section~\ref{sec:6t-sram-bitcell}.

The default probability is computed by applying the importance sampling approach developed in Section~\ref{sec:is} to its standard Monte Carlo estimator.

Note that $\Cvar(h(X)) $ can be written
\begin{align*}
  \Cvar(h(X)) = \frac{\E[h(X) \ind{h(X) > \gamma}]}{\P(h(X)> \gamma)}.
\end{align*}
Based on Section~\ref{sec:meanshift}, it is natural to consider the importance sampling version of the $\Cvar$ for $\t \in \R^d$
\begin{align}
  \label{eq:Cvar-ratio}
  \Cvar(h(X)) = \frac{\E\left[h(X + \t) \ind{h(X + \t) > \gamma} \expp{-\t \cdot X  - \abs{\t}^2/2}\right]}{\E\left[\ind{h(X + \t)> \gamma} \expp{-\t \cdot X  - \abs{\t}^2/2}\right]}.
\end{align}
Note that we could afford different importance sampling parameters for the numerator and the denominator, each being the solution of an optimization problem. Since, we at computing both $\Cvar(h(X))$ and $\P(h(X) \ge \gamma)$, we will anyway have to compute the optimal $\t$ for the denominator. Considering the computational cost of solving such an optimization problem, we believe it is not worth it using two different importance sampling parameters in~\eqref{eq:Cvar-ratio}.

Then, a standard estimator writes as
\begin{align}
  \label{eq:Cvarn}
  \Cvar_n(h(X)) = \frac{\sum_{i=1}^n h(X_i + \t) \ind{h(X_i + \t) > \gamma} \expp{-\t \cdot X_i  - \abs{\t}^2/2}}{\sum_{i=1}^n \ind{h(X_i + \t) > \gamma} \expp{-\t \cdot X_i  - \abs{\t}^2/2}}.
\end{align}
Although the estimations of $\P(h(X) \ge \gamma)$ and $\Cvar(h(X))$ are presented one and after the other, in practice they are computed simultaneously using the same runs of the simulator. Thus, the two estimators are actually obtained for the cost of a one only.

It is clear from the strong law of large number that $\Cvar_n(X) \to \Cvar(X)$ a.s. when $n \to \infty$. Due the random nature of an estimator, it has to come with a confidence interval, which relies on a central limit theorem.

\begin{proposition}
  \label{prop:tcl-Cvarn}
  The following convergence holds
  \begin{align}
    \label{eq:tcl-Cvarn}
    \sqrt{n}(\Cvar_n(h(X)) - \Cvar(h(X))) \xrightarrow[n \to \infty]{\cl} \cn(0, \sigma^2)
  \end{align}
  with
  \begin{align}
    \label{eq:Cvarn-variance}
    \sigma^2 &= \frac{\Var\left( Y^\t \ind{Y^\t > \gamma} \ce^\t \right)}{\P(Y > \gamma)^2} -2 \frac{\E[Y \ind{Y> \gamma}]}{\P(Y> \gamma)^3 } \E\left[ Y^\t \ind{Y^\t > \gamma} \left(\ce^\t \right)^2\right]\\
    & \qquad + \frac{\E[Y \ind{Y> \gamma}]^2}{\P(Y> \gamma)^3 } \left(\frac{\E\left[\ind{Y^\t > \gamma}\left(\ce^\t \right)^2\right]}{\P(Y > \gamma)} + \P(Y > \gamma)\right).
  \end{align}
\end{proposition}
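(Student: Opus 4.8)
The plan is to view $\Cvar_n(h(X))$ as a ratio of two empirical means and to conclude through a bivariate central limit theorem combined with the Delta method. Write $Y = h(X)$, $Y^\t = h(X+\t)$ and $\ce^\t = \expp{-\t \cdot X - \abs{\t}^2/2}$, and for the $i$-th sample set $N_i = h(X_i+\t)\ind{h(X_i+\t) > \gamma}\expp{-\t\cdot X_i - \abs{\t}^2/2}$ and $D_i = \ind{h(X_i+\t)>\gamma}\expp{-\t\cdot X_i - \abs{\t}^2/2}$, so that $\Cvar_n(h(X)) = \bar N_n/\bar D_n$ with $\bar N_n = \frac1n\sum_{i=1}^n N_i$ and $\bar D_n = \frac1n\sum_{i=1}^n D_i$. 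The pairs $(N_i, D_i)$ are i.i.d., and by the importance sampling identity~\eqref{eq:isgaussien} we have $\E[D_1] = \P(Y>\gamma)$ and $\E[N_1] = \E[Y\ind{Y>\gamma}]$; the strong law of large numbers then recovers the announced almost sure convergence $\bar N_n/\bar D_n \to \Cvar(h(X))$.

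First I would establish the joint CLT for the pair $(\bar N_n, \bar D_n)$. Assuming the relevant second moments are finite — after the change of measure of~\eqref{eq:vgaussien} these reduce to $\E[Y^2\ind{Y>\gamma}\expp{-\t\cdot X}]<\infty$ and $\E[\ind{Y>\gamma}\expp{-\t\cdot X}]<\infty$ — the classical multivariate CLT gives $\sqrt n\left((\bar N_n, \bar D_n) - (\E[N_1],\E[D_1])\right) \xrightarrow{\cl} \cn_2(0,\Sigma)$, where $\Sigma$ is the covariance matrix of $(N_1, D_1)$. Next I would apply the Delta method to $g(a,b) = a/b$, which is continuously differentiable near $(\E[N_1],\E[D_1])$ because $\E[D_1] = \P(Y>\gamma)>0$ under $\P(\varphi(X)\neq 0)>0$. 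With $\nabla g(\E[N_1],\E[D_1]) = \left(1/\P(Y>\gamma),\,-\E[Y\ind{Y>\gamma}]/\P(Y>\gamma)^2\right)$, this yields~\eqref{eq:tcl-Cvarn} with $\sigma^2 = \nabla g^\top \Sigma\,\nabla g$, that is
\[
\sigma^2 = \frac{\Var(N_1)}{\P(Y>\gamma)^2} - 2\,\frac{\E[Y\ind{Y>\gamma}]}{\P(Y>\gamma)^3}\,\cov(N_1, D_1) + \frac{\E[Y\ind{Y>\gamma}]^2}{\P(Y>\gamma)^4}\,\Var(D_1).
\]

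It then remains to rewrite the covariance entries in the form displayed in~\eqref{eq:Cvarn-variance}. The key observation is that the indicator is idempotent, so that $N_1 D_1 = Y^\t\ind{Y^\t>\gamma}(\ce^\t)^2$ and $D_1^2 = \ind{Y^\t>\gamma}(\ce^\t)^2$; hence $\cov(N_1,D_1) = \E[Y^\t\ind{Y^\t>\gamma}(\ce^\t)^2] - \E[Y\ind{Y>\gamma}]\,\P(Y>\gamma)$ and $\Var(D_1) = \E[\ind{Y^\t>\gamma}(\ce^\t)^2] - \P(Y>\gamma)^2$. Substituting these two expressions and collecting terms, the two contributions proportional to $\E[Y\ind{Y>\gamma}]^2/\P(Y>\gamma)^2$ — one coming from $\cov(N_1,D_1)$ with a factor $+2$ and one from $\Var(D_1)$ with a factor $-1$ — add up to a single such term, which is exactly the $\P(Y>\gamma)$ summand inside the last parenthesis of~\eqref{eq:Cvarn-variance}. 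The main obstacle is not the CLT/Delta-method machinery itself but this final bookkeeping: one must verify the stronger integrability needed for the numerator (which carries the extra factor $h$ compared with the probability estimator) and check that the several $\E[Y\ind{Y>\gamma}]^2$ terms combine with the right signs rather than cancelling, so that the compact form of~\eqref{eq:Cvarn-variance} is recovered.
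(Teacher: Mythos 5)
Your proposal is correct and takes essentially the same approach as the paper: both arguments rest on the bivariate central limit theorem for the pair (numerator, denominator) of the estimator, with the same covariance matrix $\Sigma$ and the same weight vector $\left(1/\P(Y>\gamma),\,-\E[Y\ind{Y>\gamma}]/\P(Y>\gamma)^2\right)$, and your final bookkeeping reproduces the paper's variance exactly. The only cosmetic difference is that you invoke the Delta method as a named tool, whereas the paper unrolls that very linearization by hand (explicit decomposition of $\Cvar_n - \Cvar$ followed by Slutsky's theorem).
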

\begin{proof}
  To keep the notation of the proof as compact as possible, we introduce $Y_i^\t = h(X_i + \t)$ and $\ce_i^\t = \expp{-\t \cdot X_i - \abs{\t}^2/2}$. When $i=1$, we simply drop the index. Similarly, for $\t = 0$, we write $Y = h(X)$.
\begin{align*}
  & \Cvar_n(Y) - \Cvar(Y) 
   = \frac{\sum_{i=1}^n Y^\t_i \ind{Y^\t_i > \gamma} \ce_i^\t}{\sum_{i=1}^n
   \ind{Y^\t_i> \gamma} \ce_i^\t} - \frac{\E[Y^\t \ind{Y^\t > \gamma}\ce^\t]}{\E[\ind{Y^\t> \gamma}\ce^\t ]} \\
  & = \frac{\sum_{i=1}^n Y^\t_i \ind{Y^\t_i > \gamma}\ce_i^\t}{\sum_{i=1}^n \ind{Y^\t_i> \gamma}\ce_i^\t} - 
  \frac{\E[Y^\t \ind{Y^\t > \gamma}\ce^\t]}{\sum_{i=1}^n \ind{Y^\t_i> \gamma}\ce_i^\t}
  + \frac{\E[Y^\t \ind{Y^\t > \gamma}\ce^\t]}{\sum_{i=1}^n \ind{Y^\t_i> \gamma}\ce_i^\t}
  - \frac{\E[Y^\t \ind{Y^\t > \gamma}\ce^\t]}{\E[\ind{Y^\t> \gamma}\ce^\t ]} \\
  & = \inv{\inv{n}\sum_{i=1}^n \ind{Y^\t_i> \gamma}\ce_i^\t} 
  \left(\inv{n}\sum_{i=1}^n Y^\t_i \ind{Y^\t_i > \gamma}\ce_i^\t - \E[Y^\t \ind{Y^\t > \gamma}\ce_i^\t]\right)
   \\
  & \qquad - \frac{\E[Y^\t \ind{Y^\t > \gamma}\ce^\t]}{\E[\ind{Y^\t> \gamma}\ce^\t ] \inv{n} \sum_{i=1}^n \ind{Y^\t_i> \gamma}\ce_i^\t }
  \left( \inv{n}\sum_{i=1}^n \ind{Y^\t_i> \gamma} \ce_i^\t - \E[\ind{Y^\t> \gamma}\ce^\t ] \right)  \\
  & = \inv{\inv{n}\sum_{i=1}^n \ind{Y^\t_i> \gamma}\ce_i^\t} 
  \left(\inv{n}\sum_{i=1}^n Y^\t_i \ind{Y^\t_i > \gamma}\ce_i^\t - \E[Y\ind{Y> \gamma}]\right)
   \\
 & \qquad - \frac{\E[Y\ind{Y> \gamma}]}{\P(Y> \gamma) \inv{n} \sum_{i=1}^n \ind{Y^\t_i> \gamma}\ce_i^\t }
  \left( \inv{n}\sum_{i=1}^n \ind{Y^\t_i> \gamma} \ce_i^\t - \E[\ind{Y> \gamma}]\right) 
\end{align*}
From the standard multi-dimensional central limit theorem, it is known that
\begin{align}
  \label{eq:tcl-nd}
  \sqrt{n}\left( \inv{n} \sum_{i=1}^n Y^\t_i \ind{Y^\t_i > \gamma}\ce_i^\t  -  \E[Y\ind{Y> \gamma}] , \inv{n}  \sum_{i=1}^n
  \ind{Y^\t_i> \gamma} \ce_i^\t - \P(Y> \gamma) \right) \xrightarrow[n \to \infty]{\cl} \cn(0, \Sigma)
\end{align}
where 
\begin{align*}
  \Sigma &= \cov((Y^\t \ind{Y^\t > \gamma}\ce^\t , \ind{Y^\t> \gamma}\ce^\t )) \\
  & = \begin{pmatrix}
    \Var(Y^\t \ind{Y^\t > \gamma} \ce^\t) & \cov(Y^\t \ind{Y^\t > \gamma}\ce^\t , \ind{Y^\t> \gamma}\ce^\t ) \\
    \cov(Y^\t \ind{Y^\t > \gamma}\ce^\t , \ind{Y^\t> \gamma}\ce^\t ) & \Var(\ind{Y^\t > \gamma} \ce^\t) 
  \end{pmatrix}
\end{align*}
This convergence in distribution combined with Slutsky's theorem yields that $\sqrt{n} (\Cvar_n(Y) - \Cvar(Y))$ converges in distribution to a normal random variable with mean $0$ and variance
  \begin{align*}
    \sigma^2 & = 
    {\begin{pmatrix}
        \inv{\P(Y > \gamma)} \\
        - \frac{\E[Y \ind{Y> \gamma}]}{\P(Y> \gamma)^2 }
    \end{pmatrix}}^\prime
    \Sigma
    \begin{pmatrix}
        \inv{\P(Y > \gamma)} \\
        - \frac{\E[Y \ind{Y> \gamma}]}{\P(Y> \gamma)^2 }
    \end{pmatrix} \\
    & = \frac{\E\left[ \left( Y^\t \ind{Y^\t > \gamma} \ce^\t \right)^2 \right]}{\P(Y > \gamma)^2} \\
    & \qquad
    + \frac{\E[Y \ind{Y> \gamma}]}{\P(Y> \gamma)^3 }\left( -2 \E\left[ Y^\t \ind{Y^\t > \gamma} \left(\ce^\t
      \right)^2\right] + \E\left[\ind{Y^\t > \gamma}\left(\ce^\t
    \right)^2\right] \frac{\E[Y \ind{Y> \gamma}]}{\P(Y > \gamma)}\right)  \\
    & = \frac{\Var\left( Y^\t \ind{Y^\t > \gamma} \ce^\t \right)}{\P(Y > \gamma)^2}
    -2 \frac{\E[Y \ind{Y> \gamma}]}{\P(Y> \gamma)^3 } \E\left[ Y^\t \ind{Y^\t > \gamma} \left(\ce^\t
    \right)^2\right]\\
    & \qquad
    + \frac{\E[Y \ind{Y> \gamma}]^2}{\P(Y> \gamma)^3 } \left(\frac{\E\left[\ind{Y^\t > \gamma}\left(\ce^\t
    \right)^2\right]}{\P(Y > \gamma)} + \P(Y > \gamma)\right). \qedhere
  \end{align*}
\end{proof}

\paragraph{Some comments on Proposition~\ref{prop:tcl-Cvarn}.}

It is interesting to analyse the convergence result stated in Proposition~\ref{prop:tcl-Cvarn} in light of the behaviour of an other estimator of $\Cvar(X)$. Let us introduce
\begin{align*}
  \overline{\Cvar}_n (h(X)) = \frac{\sum_{i=1}^n Y^\t_i \ce^\t_i \ind{Y^\t_i > \gamma}}{\P(X> \gamma)}.
\end{align*}
Clearly, $\E[\overline{\Cvar}_n(X)] = \E[Y | Y > \gamma]$, which assesses that $\overline{\Cvar}_n (h(X))$ is non biased. From, the standard central limit theorem, we deduce that
\begin{align*}
  \sqrt{n}(\Cvar_n(h(X)) - \Cvar(h(X))) \xrightarrow[n \to \infty]{\cl} 
  \cn( 0,  \bar \sigma^2) \mbox{ with } \bar \sigma^2 = \frac{\Var(Y^\t \ce^\t_i \ind{Y^\t > \gamma})}{\P(X
  > \gamma)^2}.
\end{align*}
Clearly,
\begin{align*}
  \bar \sigma^2 - \sigma^2 &= 2 \frac{\E[Y \ind{Y> \gamma}]}{\P(Y> \gamma)^3 } 
  \E\left[ Y^\t \ind{Y^\t > \gamma} \left(\ce^\t \right)^2\right] \\
  & \quad
  - \frac{\E[Y \ind{Y> \gamma}]^2}{\P(Y> \gamma)^3 } \left(\frac{\E\left[\ind{Y^\t > \gamma}\left(\ce^\t
  \right)^2\right]}{\P(Y > \gamma)} + \P(Y > \gamma)\right).
\end{align*}
For $\t= 0$, the difference becomes
\begin{align*}
  \bar \sigma^2 - \sigma^2 &= 2 \frac{\E[Y \ind{Y> \gamma}]}{\P(Y> \gamma)^3 } \E\left[ Y \ind{Y > \gamma} \right]
  - \frac{\E[Y \ind{Y> \gamma}]^2}{\P(Y> \gamma)^3 } \left(\frac{\E\left[\ind{Y > \gamma}
  \right]}{\P(Y > \gamma)} + \P(Y > \gamma)\right) \\
  & =  \frac{\E[Y \ind{Y> \gamma}]^2}{\P(Y> \gamma)^3 }  (1 - \P(Y > \gamma)) > 0.
\end{align*}

The limiting variance $\bar \sigma^2$ is larger than the one appearing in~\eqref{eq:tcl-Cvarn} (at least for $\t = 0$) but unlike $\overline{\Cvar}_n(h(X))$, the estimator $\Cvar_n(h(X))$ is biased, which is precisely the effect induced by replacing $\P(Y > \gamma)$ by $\inv{n} \sum_{i=1}^n \ind{Yi^\t_i > \gamma} \ce^\t_i$ in $\overline{\Cvar}_n(h(X))$ to obtain $\Cvar_n(h(X))$.

The gap between the two limiting variances may become impressively large. Consider the following toy example with $h(x) = x$, $X$ following a real valued standard normal distribution and $a = 1.5$. Take $\t = 0$. In this case, $\P(X > \gamma) \approx 0.067$ and we find $\sigma^2 \approx 2$ whereas $\bar \sigma^2 \approx 54$. 

In the case $\t = 0$, we can compute the bias.
\begin{proposition}
  The estimator $\Cvar_n(h(X))$ has bias $-\E[h(X) | h(X) > \gamma] \P(h(X) \le \gamma)^n$.
\end{proposition}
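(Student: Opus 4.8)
The plan is to exploit the fact that at $\t = 0$ the estimator reduces to the plain empirical conditional mean of the exceeding samples, so that its only source of bias is the event on which no sample lands in the tail. First I would write the estimator explicitly at $\t = 0$, where the weights $\ce^\t_i$ all equal $1$, giving
\begin{align*}
  \Cvar_n(h(X)) = \frac{\sum_{i=1}^n h(X_i) \ind{h(X_i) > \gamma}}{\sum_{i=1}^n \ind{h(X_i) > \gamma}},
\end{align*}
with the convention that this ratio equals $0$ whenever the denominator vanishes, i.e. whenever none of the samples falls in the tail.

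Next I would introduce $N = \sum_{i=1}^n \ind{h(X_i) > \gamma}$, the number of exceedances, which follows a binomial law with parameters $n$ and $p = \P(h(X) > \gamma)$. The key observation is that, conditionally on $\{N = k\}$ with $k \ge 1$, the $k$ exceeding values are i.i.d. with the conditional law of $h(X)$ given $h(X) > \gamma$, so that
\begin{align*}
  \E\left[\Cvar_n(h(X)) \mid N = k \right] = \E[h(X) \mid h(X) > \gamma]
\end{align*}
for every $k \ge 1$, a value independent of $k$. On the complementary event $\{N = 0\}$ the estimator is $0$ by convention and contributes nothing.

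Then I would sum over the values of $N$ and factor out the common conditional mean:
\begin{align*}
  \E\left[\Cvar_n(h(X))\right] = \E[h(X) \mid h(X) > \gamma] \, \P(N \ge 1) = \E[h(X) \mid h(X) > \gamma]\left(1 - \P(h(X) \le \gamma)^n\right),
\end{align*}
using $\P(N = 0) = (1 - p)^n = \P(h(X) \le \gamma)^n$. Subtracting the true value $\Cvar(h(X)) = \E[h(X) \mid h(X) > \gamma]$ then yields the announced bias $-\E[h(X) \mid h(X) > \gamma]\,\P(h(X) \le \gamma)^n$.

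The only delicate point, and the step I expect to require the most care, is the conditioning argument: one must justify that conditioning on the cardinality $N = k$ leaves the $k$ tail samples i.i.d. with the conditional distribution of $h(X)$ given $h(X) > \gamma$. This follows from the independence of the $X_i$ together with exchangeability—the identity of the indices that exceed the threshold is independent of the tail values themselves—but it deserves a clean statement, since it is precisely what makes the conditional expectation independent of $k$ and enables the factorisation above. Everything else is a routine summation against the binomial weights.
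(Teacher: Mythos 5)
Your proof is correct and follows essentially the same route as the paper: both decompose the expectation of the estimator over the number of exceedances, use the identical distribution (exchangeability) of the tail values to reduce to a single sample, and sum against the binomial weights to get $\E[\Cvar_n(h(X))] = \E[h(X) \mid h(X) > \gamma]\left(1 - \P(h(X) \le \gamma)^n\right)$. The one step you flag as delicate --- that conditionally on the number of exceedances the tail values have the law of $h(X)$ given $h(X) > \gamma$ --- is precisely what the paper makes rigorous, by explicitly computing the joint distribution $\P(Z_1 > z, T_n = t)$ rather than invoking exchangeability abstractly.
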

\begin{proof}
  We use the notation $Y = h(X)$. We define $T_n = \sum_{i=1}^n \ind{Y_i > \gamma}$ and the tuple $(Z_1, \dots, Z_{T_n})$ holds the values of $(Y_1, \dots, Y_n)$ for which the $Y_i > \gamma$. With these new random variables,
  \begin{align*}
    \Cvar_n(Y) = \inv{T_n} \sum_{i=1}^{T_n} Z_i.
  \end{align*}
  \begin{align}
    \label{eq:split}
    \E\left[  \inv{T_n} \sum_{i=1}^{T_n} Z_i  \right] = \sum_{t=1}^n  \E\left[  \inv{t} \sum_{i=1}^{t} Z_i \ind{T_n =t}  \right] = \sum_{t=1}^n  \E\left[  Z_1 \ind{T_n =t}  \right]. 
  \end{align}
  as the random variable $Z_i \ind{T_n = t}$ are identically distributed. Let us compute the joint distribution of $(Z_1, T_n)$. Let $z \in \R$ and $t \in \{1, \dots, n\}$
  \begin{align*}
    &\P(Z_1 > z, T_n = t) \\
    &= \P(\exists i_1,\dots, i_t \mbox{ s.t. } Y_{i_1} > z \vee \gamma,
    Y_{i_2} > \gamma,\dots, Y_{i_t} > \gamma, \forall i \notin \{i_1,\dots, i_t\} Y_i \le \gamma) \\
    &  = C_n^t \P(Y_1 > z \vee \gamma, Y_2 > \gamma,\dots, Y_t > \gamma, Y_{t+1} \le \gamma, \dots, Y_n \le \gamma) \\
    &  = C_n^t \P(Y > z \vee \gamma) \P(Y> \gamma)^{t-1} (1 - \P(Y> \gamma))^{n-t}.
  \end{align*}
  If $z \le \gamma$, $\P(Z_1 > z, T_n = t)  = \P(T_n=t) = C_n^t \P(Y> \gamma)^{t} (1 -
  \P(Y> \gamma))^{n-t}$.
  Note that $\P(Z_1 \ind{T_n = t} > z) = \P(Z_1 > z, T_n = t)$. Hence,
  \begin{align*}
    d\P(Z_1 \ind{T_n = t} > z) = 
    \begin{cases}
      0 & \mbox{if $z \le a$} \\
      C_n^t \P(Y> \gamma)^{t-1} (1 - \P(Y> \gamma))^{n-t} d \P(Y > z) & \mbox{if $z > \gamma$}
    \end{cases}
  \end{align*}
  Then, from \eqref{eq:split}, we deduce
  \begin{align*}
    \E[\Cvar_n(Y)] & =  \sum_{t=1}^n  \int_a^\infty z d\P(Z_1 \ind{T_n = t} \le z)  \\
    & = \sum_{t=1}^n  C_n^t \P(Y> \gamma)^{t-1} (1 - \P(Y> \gamma))^{n-t} \int_a^\infty z d \P(Y < z)\\
    & = \sum_{t=1}^n  C_n^t \P(Y> \gamma)^{t-1} (1 - \P(Y> \gamma))^{n-t} \E[Y \ind{Y> \gamma}] \\
    & = \E[Y | Y> \gamma] (1 - (1 - \P(Y> \gamma))^{n} ). \qedhere
  \end{align*}
  
\end{proof}

\section{Stratified sampling}
\label{sec:stratif}

An other well-known tool for rare event simulation is stratified sampling. This technique can lead to very impressive results provided one has a good a priori knowledge of the strata and of their respective weights in the sampling.

\subsection{Introduction to stratification}

Consider a partition $D_1, \dots, D_I$ of $\R^d$ and assume we know how to sample according to $\cl(X | X \in D_i)$ for $i=1,\dots, I$. Let $p_i = \P(X \in D_i)$. We assume that the strata are chosen such that $p_i >0$ for all $i$.  For a fixed $1 \le i \le I$, let $(X_i^{j}, 1 \le j \le N_i)$ be i.i.d.  samples according to $\cl(X | X \in D_i)$ where $N_i$ is the number of samples in stratum $i$. We assume that the samples in two different strata are independent.

We know that
\begin{align*}
  \E[\varphi(X)] = \sum_{i=1}^I \E[\varphi(X) | X \in D_i] \, p_i
\end{align*}
Hence, we can use the following Monte Carlo estimator to approximation
$\E[\varphi(X)]$
\begin{align*}
  \sum_{i=1}^I p_i \inv{N_i} \sum_{j=1}^{N_i} \varphi(X^{j}_{i}).
\end{align*}
Assume the total number of samples is $N$, then the variance of this estimator is minimum for $N_i = q_i / N$ with 
\begin{align}
  \label{eq:alloc}
  q_i = \frac{p_i v_i}{\sum_{\ell=1}^I p_\ell v_\ell}
\end{align}
where $v_i^2 = \Var(\varphi(X^{(i)}))$. Using this optimal allocation yields to an estimator with minimum variance given by $\frac{v_*^2}{N}$ with
\begin{equation*}
  v_*^2 = \left( \sum_{i=1}^I p_i v_i \right)^2.
\end{equation*}
However, this allocation formula can be hardly used in practice as the $v_i's$ are usually unknown. To overcome this difficulty, \cite{EtoJou10} proposed an adaptive scheme to learn the $(v_i)_{i=1,\dots,I}$ and then the $(q_i)_{i=1,\dots,I}$.

\subsection{Importance sampling as a stratifying direction}
\label{sec:strat-is}

The key difficulty in applying stratified sampling techniques is to know which strata to be used. 
\cite{glas_strati} suggested to use the optimal importance sampling vector as a direction along 
which to stratify. Assume some optimal shift $\mu^\s$ has already been computed. The vector 
$u = \frac{\mu^\s}{\norm{\mu^\s}}$ defines an hyperplane on which the strata are based. 
Consider an increasing sequence of levels $(a_i)_{0 \le i \le I}$ with $a_0 = -\infty$ 
and $a_I = +\infty$, we consider the partition $\cup_{i=1}^I D_i$ with 
$D_i = \{x \in \R^d \; : \; a_{i-1} \le u \cdot x < a_i\}$. Since $u \cdot X$ follows the 
standard normal distribution with values in $\R$, the levels $a_i$'s are usually chosen as 
quantiles of the standard normal distribution.  Then, simulating the normal random vectors 
conditionally to being in a given stratum can be easily implemented by using the following result.
\begin{lemma}
  \label{lem:conditional_sampling} Let $a < b$ be real numbers and $X \sim \cn(0,I_d)$. Let $U$ be a
   random variable with uniform distribution on $[0,1]$ and $Y \sim \cn(0,I_d)$ both independent of $X$. 
   Set $Z = \Phi^{-1}(\Phi(a) + U(\Phi(b) - \Phi(a)))$, where $\Phi$ is the cumulative distribution function
   of the standard normal distribution. Then,
  \begin{equation*}
    u Z + Y - u (u \cdot Y) \sim \cl(X | a \le u \cdot X \le b).
  \end{equation*}
\end{lemma}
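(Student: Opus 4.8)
The plan is to exploit the orthogonal decomposition of a standard Gaussian along and across the direction $u$. Writing $P = I_d - u u'$ for the orthogonal projector onto $u^\perp$, every vector splits as $X = (u \cdot X)\, u + P X$, so it suffices to identify the \emph{joint} law of the scalar part $u \cdot X$ and the orthogonal part $PX$, both under the unconditioned law and under the conditioning event $\{a \le u \cdot X \le b\}$.

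First I would record the independence structure. Since $X \sim \cn(0, I_d)$ and $u$ is a unit vector, $u \cdot X$ and $PX$ are jointly Gaussian with $\cov(u \cdot X, PX) = u'(I_d - u u') = u' - u' = 0$ (using $\norm{u} = 1$); hence they are independent, with $u \cdot X \sim \cn(0,1)$ and $PX \sim \cn(0, P)$. Because the conditioning event depends only on $u \cdot X$, independence implies that under $\cl(X \mid a \le u \cdot X \le b)$ the orthogonal part $PX$ keeps its law $\cn(0,P)$ and stays independent of the scalar part, while the scalar part $u \cdot X$ becomes a standard normal truncated to $[a,b]$. Consequently $\cl(X \mid a \le u \cdot X \le b)$ is the law of $u\,W + V$, where $W$ is a standard normal truncated to $[a,b]$, $V \sim \cn(0,P)$, and $W$ is independent of $V$.

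Next I would identify the three ingredients of the proposed sample $u Z + Y - u(u \cdot Y) = u Z + P Y$. By the probability integral transform, $\Phi(a) + U(\Phi(b) - \Phi(a))$ is uniform on $[\Phi(a), \Phi(b)]$, so $Z = \Phi^{-1}(\Phi(a) + U(\Phi(b) - \Phi(a)))$ has exactly the law of a standard normal truncated to $[a,b]$, i.e. the law of $W$. The orthogonal part $P Y = Y - u(u \cdot Y) \sim \cn(0,P)$, matching $V$, and $Z$ is independent of $P Y$ because $U$ (hence $Z$) is independent of $Y$. Matching the two independent pairs $(Z, P Y)$ and $(W, V)$, which share the same marginals, gives $u Z + P Y \sim u W + V$, which is precisely the claimed conditional law.

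The only genuinely delicate point is the assertion that conditioning on an event measurable with respect to one factor of an independent pair leaves the law of the other factor untouched and preserves the independence. I would justify this by the elementary product-measure identity $\P(u \cdot X \in A,\ P X \in B \mid a \le u \cdot X \le b) = \P(u \cdot X \in A \mid a \le u \cdot X \le b)\,\P(P X \in B)$, valid for all Borel $A, B$ by independence of $u \cdot X$ and $P X$; everything else reduces to the covariance computation above and the standard inverse-CDF characterization of the truncated normal.
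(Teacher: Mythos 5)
Your proof is correct and complete. A point of comparison worth noting: the paper states Lemma~\ref{lem:conditional_sampling} without any proof (it is invoked as a known device from the stratification literature), so there is no in-paper argument to measure yours against; your write-up supplies exactly the standard justification. The three ingredients are all sound: the orthogonal splitting $X = (u\cdot X)u + PX$ with $P = I_d - uu'$, where joint Gaussianity plus the covariance computation $u'(I_d - uu') = 0$ (using $\norm{u}=1$) gives independence of the scalar and orthogonal parts; the product-measure identity showing that conditioning on $\{a \le u\cdot X \le b\}$ truncates the scalar factor while leaving the orthogonal factor and the independence intact; and the inverse-CDF identification of $Z$ as a standard normal truncated to $[a,b]$. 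One small remark: your argument uses independence of $U$ and $Y$ (so that $Z$ and $PY$ are independent), whereas the lemma only says both are independent of $X$; the intended reading is clearly mutual independence of $(X,U,Y)$, and indeed the conclusion would fail without $U \perp Y$, so you were right to make that hypothesis explicit rather than silently rely on it.
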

A natural approach is to proceed in two steps
\begin{enumerate}
  \item \emph{Importance Sampling step}. Apply Algorithm~\ref{algo:splitting-is} to compute an almost 
  optimal change of measure with parameters $(\mu^\s, I_d)$.
  \item \emph{Stratified sampling}. Set $u = \frac{\mu^\s}{\norm{\mu^\s}}$.  Fix some levels 
  $(a_0, \ldots, a_I)$ based on the quantiles of the standard normal distribution and use the strata 
  defined by $D_i = \{x \in \R^d \; : \; a_{i-1} \le u \cdot x < a_i\}$ for $1 \le i \le I$ in the algorithm 
  proposed by~\cite{EtoJou10}.
\end{enumerate}

\section{Rare events simulation for electronic circuits}
\label{sec:app}

\subsection{Large scale analytic problems}
\label{sec:large-scale}

We validate in this section the Importance Sampling multilevel approach with large scale problems.
The test-cases were generated automatically for the purpose of this test, and not related to circuit simulation.

The response function is defined as $h(x) = a \cdot x_{A} + b \cdot x_{B}$, and $x = (x_A, x_B)$ is a vector of normal
variables $\cn(0,1)$. The coefficients $a$ and $b$ were defined such as $ \max_{i\in A} |a_i| \gg \max_{i \in B} |b_i|$. 
This makes the subset $A$ the set of important variables. The remaining $B$-components may be considered as noisy variables.

For our comparisons, we fixed the cardinality $|A| = 10$ and the vector $a$, and we gradually increased the dimension of 
the noisy components. The size of the batches is fixed at $1000$ runs.

\begin{center}
  \begin{tabular}{|c|c|c|c|}
    \hline
    $|B|$ & Prob. & CI@95\% & Nb. Runs \\
    \hline\hline
    1000            & $2.8039 \times 10^{-5}$ & $8.60$\%  &  $5000$    \\ 
    2000            & $2.9891 \times 10^{-5}$ & $8.91$\%  &  $5000$    \\ 
    10000           & $2.8239 \times 10^{-5}$ & $9.65$\%  &  $10000$    \\ 
    50000           & $2.9815 \times 10^{-5}$ & $9.83$\%  &  $20200$   \\ 
    \hline
  \end{tabular}
  \end{center}

The previous table indicates that the approach is rather not dependent on the input dimension of the problem. The 
differences, as the dimension grows, are due mainly to the difficulty of finding the 'optimal' subset of important
variables during the search of the final mean-shift. Increasing the batch size may help as well, because larger 
batches will improve the accuracy of the reduction dimension algorithms.

\subsection{Circuit examples}

We use systematically the same settings for the Importance sampling algorithm. The relative precision on the 
probability estimator is fixed at 10\%. And the size of each batch is 1000 runs, the total number of runs is 
therefore a multiple of this size. 
We report the ``Speedup'' in our tests as the fraction of the theoretical number runs of the plain MC over the effective
number of runs to obtain the same relative accuracy (based on on Central Limit theorem).
For our experiments, electrical simulations will be achieved with the commercial SPICE simulators 
Eldo\textsuperscript{\tiny\textregistered} and EldoRF from Siemens EDA.

\label{sec:examples}
\subsubsection{VCO LC Tank}
This is an example of an analog IP block showing the IS algorithm for low probability estimation and quantile estimation. The circuit 
is a simple LC-tank VCO oscillating at 1.8GHz (inspired from \cite{craninckx})

The circuit contains 28 Gaussian random variables. The oscillation frequency (FOSC) is obtained using the Steady-State Analysis of Eldo RF.
The phase noise (PNOISE) at 1 MHz offset from the carrier 
is also extracted using the Steady-State noise analysis of Eldo RF. Both the oscillation frequency and the phase noise are strongly non-Gaussian.
The distributions are heavily skewed (left and right respectively). This means that all kinds of Gaussian extrapolations using the estimated 
standard deviation with a few 100's runs are completely wrong. The results for the analysis of ISMC are mostly read in the following table.

\begin{center}
\begin{tabular}{|c|c|c|c|c|c|}
  \hline
  Measure & Tail & Prob & CI@95\% & Nb. Runs & Speedup \\
  \hline\hline
  FOSC             & Left  & $7.9849 \times 10^{-5}$ & $8.89$\%  &  $5000$   &   $1.2 \times 10^3$ \\
  PNOISE           & Right & $3.5240 \times 10^{-6}$ & $9.05$\%  &  $9000$   &   $1.5 \times 10^4$ \\ 
  \hline
\end{tabular}
\end{center}

The \texttt{Prob} column indicates the tail probability and \texttt{CI@95\%} the relative Confidence Interval at 95\% level. 
The \texttt{Nb. Runs} column gives the total number of runs for each estimator. The \texttt{Speedup} columns finally provides 
the speedup factor over a plain Monte Carlo simulation (for the same accuracy).  

Similarly, one gives the results for the quantile estimation where we fixed the tail probability at level $1.0 \times 10^{-4}$. 
 
\begin{center}
  \begin{tabular}{|c|c|c|c|c|c|}
    \hline
    Measure & Tail & Quantile & CI@95\% & Nb. Runs & Speedup \\
        \hline\hline
    FOSC             & Left  & $1.75195 \times 10^9$ & $0.06$\%  &  $4000$   &   $3.1 \times 10^2$ \\
    PNOISE           & Right & $-1.1239 \times 10^{2}$ & $0.16$\%  &  $4000$   &   $2.2 \times 10^2$ \\ 
    \hline
  \end{tabular}
  \end{center}

\subsubsection{6T SRAM Bitcell}
\label{sec:6t-sram-bitcell}
The ciruit is a classical six transistors SRAM bit cell designed
in CMOS 45nm. A subset of the device model parameters has variability information with 
Gaussian random variables. In this example, the total number
of statistical parameters is 36 (6 per device). The cell is configured for 
measuring the write delay. The purpose of the
Monte Carlo simulation is to estimate the probability for the write delay
to be larger than a certain threshold. This threshold might be used to
define a ``fail'' behavior for yield estimation. Formally, the problem is
to estimate the probability of failure and the associated confidence interval.

This kind of low probability estimation is difficult (not to say impossible)
with plain Monte Carlo, because the number of runs required to obtain a decent
accuracy (in terms of confidence interval on the probability) is huge.
In this particular example, the IS flow estimates a probability of $9.1893 \times 10^{-6}$
to within $9.99$\% (with 95\% confidence level) in 8000 runs, whereas it would
require approximately $4\times 10^{7}$ runs with plain Monte Carlo.

\begin{center}
  \begin{tabular}{|c|c|c|c|c|c|}
    \hline
    Measure & Tail & Prob & CI@95\% & Nb. Runs & Speedup \\
    \hline\hline
    WDELAY             & Right  & $9.1893 \times 10^{-6}$ & $9.99$\%  &  $8000$   &   $5.2 \times 10^3$ \\
    \hline
  \end{tabular}
  \end{center}

We give the detail of the multilevel Importance Sampling phase with this circuit. 
We recall that we need to use intermediate levels to reduce the 
difficulty of sampling a rare event. If the levels $(\gamma_k)_k$ are chosen 
so that the events $\{h(X^{(k+1)}_j) \ge \gamma_k\}$ are never too rare, 
then a fairly small number of samples $n$ can be used. The following table 
provides the first iterations of this exploration phase. 

\begin{center}
  \begin{tabular}{|c|c|c|c|c|}
    \hline
    Iteration (k) & Nb. Runs & $\gamma_k$ & $S_{m,n}(\gamma_k)$ & CI@95\%  \\
    \hline\hline
    0         & 1000     & $3.99328 \times 10^{1}$ & $1.51331 \times 10^{-3}$ &  $21.9$\% \\
    1         & 1000     & $4.69753 \times 10^{1}$ & $1.38538 \times 10^{-4}$ &  $27.0$\%  \\     
    2         & 1000     & $5.31605 \times 10^{1}$ & $8.51455 \times 10^{-6}$ &  $34.7$\% \\
    \hline
  \end{tabular}
  \end{center}

We now provide some results on CVaR computation for this circuit. As noted above, comparing the threshold $\gamma$
and CVaR may give a hint about the dispersion of 'Failed' population of runs.  

\begin{center}
  \begin{tabular}{|c|c|c|c|c|c|}
    \hline
    Measure & $\gamma$ & CVaR & CI@95\% & Nb. Runs & Speedup \\
    \hline\hline
    WDELAY             & $5.3120 \times 10^1$&  $5.3388 \times 10^1$  & $10.1$\%  &  $9000$   &   $5.1 \times 10^3$ \\
    \hline
  \end{tabular}
  \end{center}

  It is important to note here that the CVaR is a by-product of the IS flow which is mainly drived by the computation 
  of the failure probability. The computation of the CVaR is based on the same samples. Here practitioners
  may conclude that picking one point of the failed runs provides an accurate example for 'debugging' or 
  understanding the reasons that lead to this fail point. The analysis of the largest components of the
  mean-shift may help further to identify to variables and therefore the electronic devices at the origin of the
  typical failure point.  

  \subsubsection{StdCell LH}
This circuit is flip-flop or latch circuit extracted from a library of standard cells. It is a circuit 
that can be used to store state information, and a fundamental storage element in sequential logic
of digital electronics systems used in computers, and many types of systems. The number of random 
variables is fixed at 66. The ``LAB3'' output represents the measure of a delay (in seconds). 

\begin{center}
  \begin{tabular}{|c|c|c|c|c|c|}
    \hline
    Measure & Tail & Prob & CI@95\% & Nb. Runs & Speedup \\
    \hline\hline
    LAB3           & Right & $1.2493 \times 10^{-9}$ & $8.05$\%  &  $7000$   &   $6.8 \times 10^7$ \\ 
    \hline
  \end{tabular}
\end{center}

The speedup here is quite impressive, but note that in general the testbenches used for such 
standard cells contain multiple output measures. Because the samples cannot be mutualized accross 
the different estimators, the total number of runs will be a multiple of the computed quantiles 
and/or probabilities. 

\subsubsection{StdCell MUX4}
This circuit a multiplexer (or mux) is another fundamental block extracted from a library of standard cells. 
The number of random variables is fixed at 116. The output response chosen for this experiment is a measure 
of delay (in seconds) on a critical path. It is therefore a time-domain analysis.

We are interested here in the computation of quantiles at some predefined failure probabilities.

\begin{center}
  \begin{tabular}{|c|c|c|c|c|c|}
    \hline
    Measure & Failure Prob. & Quantile & CI@95\% & Nb. Runs & Speedup \\
    \hline\hline
    LAB179  & $1.34990 \times 10^{-3}$ & $2.55684 \times 10^{-10}$ & $0.72$\%  &  $4000$   &   $3.2$ \\ 
    \hline
       -    & $9.86588 \times 10^{-10}$ & $2.78912 \times 10^{-10}$ & $0.22$\%  &  $6000$   &   $3.9 \times 10^6$ \\
    \hline
  \end{tabular}
\end{center}

As we noted in the previous example, if multiple circuit output need to be characterized then the 
speedup $3.2$ obtained for the first failure probability (approx. $1.3 \times 10^{-3}$) may 
not compensate the total number of runs required for  estimating each quantile individually. 
A plain Monte Carlo simulation may do the job because the quantiles
can be computed on the same samples. This option may not be possible when very low failure probabilities
are searched. In such cases, the IS algorithm may prevail over the plain MC.


\subsubsection{Memory Cell}
This circuit represents a block of memory simulated with its peripheral circuitry. The number of random variables 
is fixed at 2096. The simulation output is here the read delay measured in seconds.

\begin{center}
  \begin{tabular}{|c|c|c|c|c|c|}
    \hline
    Measure & Tail & Prob & CI@95\% & Nb. Runs & Speedup \\
    \hline\hline
    READ10           & Right & $3.4506 \times 10^{-8}$ & $8.96$\%  &  $9000$   &   $1.5 \times 10^6$ \\ 
    \hline
  \end{tabular}
\end{center}

This last example is provided to demonstrate the effectiveness of IS algorithm on a quite large 
size problem. This is a time-domain analysis which requires several minutes for a nominal simulation on a 
modern architecture. Our IS algorithm can run in full parallel mode, taking advantage of compute farms.

\bibliographystyle{abbrvnat}
\bibliography{rare-biblio}

\begin{thebibliography}{16}
\providecommand{\natexlab}[1]{#1}
\providecommand{\url}[1]{\texttt{#1}}
\expandafter\ifx\csname urlstyle\endcsname\relax
  \providecommand{\doi}[1]{doi: #1}\else
  \providecommand{\doi}{doi: \begingroup \urlstyle{rm}\Url}\fi

\bibitem[Agarwal and Nassif(2008)]{agar:nass:08}
K.~Agarwal and S.~Nassif.
\newblock The impact of random device variation on sram cell stability in
  sub-90-nm cmos technologies.
\newblock \emph{IEEE Transactions on Very Large Scale Integration (VLSI)
  Systems}, 16\penalty0 (1):\penalty0 86--97, 2008.
\newblock \doi{10.1109/TVLSI.2007.909792}.

\bibitem[Badouraly~Kassim et~al.(2015)Badouraly~Kassim, Lelong, and
  Loumrhari]{is-jump}
L.~Badouraly~Kassim, J.~Lelong, and I.~Loumrhari.
\newblock Importance sampling for jump processes and applications to finance.
\newblock \emph{Journal of Computational Finance}, 2\penalty0 (19), 2015.

\bibitem[Craninckx and Steyaert(1997)]{craninckx}
J.~Craninckx and M.~Steyaert.
\newblock A 1.8-ghz low-phase-noise cmos vco using optimized hollow spiral
  inductors.
\newblock \emph{IEEE Journal of Solid-State Circuits}, 32\penalty0
  (5):\penalty0 736--744, 1997.
\newblock \doi{10.1109/4.568844}.

\bibitem[Etoré and Jourdain(2010)]{EtoJou10}
P.~Etoré and B.~Jourdain.
\newblock Adaptive optimal allocation in stratified sampling methods.
\newblock \emph{Methodol. Comput. Appl. Probab.}, 12\penalty0 (3):\penalty0
  335--360, 2010.

\bibitem[Glasserman et~al.(1999)Glasserman, Heidelberger, and
  Shahabuddin]{glas_strati}
P.~Glasserman, P.~Heidelberger, and P.~Shahabuddin.
\newblock Asymptotically optimal importance sampling and stratification for
  pricing path-dependent options.
\newblock \emph{Math. Finance}, 9\penalty0 (2):\penalty0 117--152, 1999.

\bibitem[Ho et~al.(1975)Ho, Ruehli, and Brennan]{nodal}
C.-W. Ho, A.~Ruehli, and P.~Brennan.
\newblock The modified nodal approach to network analysis.
\newblock \emph{IEEE Transactions on Circuits and Systems}, 22\penalty0
  (6):\penalty0 504--509, 1975.
\newblock \doi{10.1109/TCS.1975.1084079}.

\bibitem[Hocevar et~al.(1983)Hocevar, Lightner, and Trick]{hocevar}
D.~Hocevar, M.~Lightner, and T.~Trick.
\newblock A study of variance reduction techniques for estimating circuit
  yields.
\newblock \emph{IEEE Transactions on Computer-Aided Design of Integrated
  Circuits and Systems}, 2\penalty0 (3):\penalty0 180--192, 1983.
\newblock \doi{10.1109/TCAD.1983.1270035}.

\bibitem[Jourdain and Lelong(2009)]{jour:lelo:09}
B.~Jourdain and J.~Lelong.
\newblock Robust {A}daptive {I}mportance {S}ampling for {N}ormal {R}andom
  {V}ectors.
\newblock \emph{Ann. Appl. Probab.}, 19\penalty0 (5):\penalty0 1687--1718,
  2009.

\bibitem[Kanj et~al.(2006)Kanj, Joshi, and Nassif]{mixture_sram}
R.~Kanj, R.~Joshi, and S.~Nassif.
\newblock Mixture importance sampling and its application to the analysis of
  sram designs in the presence of rare failure events.
\newblock In \emph{2006 43rd ACM/IEEE Design Automation Conference}, pages
  69--72, 2006.
\newblock \doi{10.1145/1146909.1146930}.

\bibitem[Kroese et~al.(2006)Kroese, Porotsky, and Rubinstein]{MR2329305}
D.~P. Kroese, S.~Porotsky, and R.~Y. Rubinstein.
\newblock The cross-entropy method for continuous multi-extremal optimization.
\newblock \emph{Methodol. Comput. Appl. Probab.}, 8\penalty0 (3):\penalty0
  383--407, 2006.

\bibitem[Ledoux and Talagrand(1991)]{ledouxtalagrand}
M.~Ledoux and M.~Talagrand.
\newblock \emph{Probability in {B}anach spaces}, volume~23 of \emph{Ergebnisse
  der Mathematik und ihrer Grenzgebiete (3) [Results in Mathematics and Related
  Areas (3)]}.
\newblock Springer-Verlag, Berlin, 1991.

\bibitem[McCalla(1987)]{mccalla}
W.~J. McCalla.
\newblock \emph{Fundamentals of Computer-Aided Circuit Simulation}.
\newblock Kluwer, B.V., NLD, 1987.
\newblock ISBN 0898382483.

\bibitem[McConaghy et~al.(2013)McConaghy, Breen, Dyck, and Gupta]{solido}
T.~McConaghy, K.~Breen, J.~Dyck, and A.~Gupta.
\newblock \emph{Variation-Aware Design of Custom Integrated Circuits: A
  Hands-on Field Guide}.
\newblock Springer, 01 2013.
\newblock ISBN 978-1-4614-2268-6.

\bibitem[Rubinstein and Shapiro(1993)]{Rubinstein}
R.~Y. Rubinstein and A.~Shapiro.
\newblock \emph{Discrete event systems}.
\newblock Wiley Series in Probability and Mathematical Statistics: Probability
  and Mathematical Statistics. John Wiley \& Sons Ltd., 1993.

\bibitem[Singhee and Rutenbar(2009)]{blockade}
A.~Singhee and R.~A. Rutenbar.
\newblock Statistical blockade: Very fast statistical simulation and modeling
  of rare circuit events and its application to memory design.
\newblock \emph{IEEE Transactions on Computer-Aided Design of Integrated
  Circuits and Systems}, 28\penalty0 (8):\penalty0 1176--1189, 2009.

\bibitem[Vlach and Singhal(1993)]{vlach_singhal}
J.~Vlach and K.~Singhal.
\newblock \emph{Computer Methods for Circuit Analysis and Design}.
\newblock John Wiley \& Sons, Inc., USA, 2nd edition, 1993.
\newblock ISBN 0442011946.

\end{thebibliography}
\end{document}